\definecolor{mygrey}{gray}{0.45}
\definecolor{blue}{RGB}{63, 105, 225}
\definecolor{pink}{RGB}{180, 105, 90}
\definecolor{red}{RGB}{180,0,0}
\definecolor{green}{RGB}{60, 120, 60}
\newtheorem{thm}{Theorem}[section]
\newtheorem{cor}[thm]{Corollary}
\newtheorem{lemma}[thm]{Lemma}
\newtheorem{propn}[thm]{Proposition}
\theoremstyle{definition}
\newtheorem{defn}[thm]{Definition}
\newtheorem*{claim}{Claim}
\newtheorem*{question}{Question}
\theoremstyle{remark}
\newcommand\nc{\newcommand}
\nc\Span{\text{\rm Span}}
\nc\Id{\text{Id}}
\nc\coker{\text{coker}}
\nc \cc {\mathbb{C}}
\nc \dd {\mathbb{D}}
\nc \ff {\mathbb{F}}
\nc \hh {\mathbb{H}}
\nc \ii {\mathbb{I}}
\nc \jj {\mathbb{J}}
\nc \kk {\mathbb{K}}
\nc \LL {\mathbb{L}} 
\nc \mm {\mathbb{M}}
\nc \nn {\mathbb{N}}
\nc \oo {\mathbb{O}}
\nc \pp {\mathbb{P}}
\nc \qq {\mathbb{Q}}
\nc \rr {\mathbb{R}}
\renewcommand \tt {\mathbb{T}}
\nc \zz {\mathbb{Z}}
\nc\cA{\mathcal{A}}
\nc\cB{\mathcal{B}}
\nc\cC{\mathcal{C}}
\nc\cD{\mathcal{D}}
\nc\cE{\mathcal{E}}
\nc\cF{\mathcal{F}}
\nc\cG{\mathcal{G}}
\nc\cH{\mathcal{H}}
\nc\cK{\mathcal{K}}
\nc\cL{\mathcal{L}}
\nc\cM{\mathcal{M}}
\nc\cN{\mathcal{N}}
\nc\cO{\mathcal{O}}
\nc\cP{\mathcal{P}}
\nc\cQ{\mathcal{Q}}
\nc\cR{\mathcal{R}}
\nc\cS{\mathcal{S}}
\nc\csf{\mathcal{S}\mathcal{F}}
\nc\fg{\mathfrak{g}}
\nc\fm{\mathfrak{m}}
\nc\fp{\mathfrak{p}}
\nc\fso{\mathfrak{so}}
\nc\fsu{\mathfrak{su}}
\nc\Tr{\text{Tr}}
\nc\into{\hookrightarrow}
\nc\st{\text{ s.t. }}
\nc\intense[1]{\textcolor[rgb]{1.00,0.00,0.00}{\textbf{#1}}}
\renewcommand{\(}{\left(}
\renewcommand{\)}{\right)}
\nc\Mat{\text{\rm Mat}}
\nc\GL{\text{\rm GL}}
\nc\SU{\text{\rm SU}}
\nc\SO{\text{\rm SO}}
\nc\SL{\text{\rm SL}}
\nc\Sp{\text{\rm Sp}}
\nc\EL{\text{\rm EL}}
\nc\GEM{\text{\rm GEM}}
\nc\Alt{\text{\rm Alt}}
\nc\Sym{\text{\rm Sym}}
\nc\Hess{\text{Hess}}
\nc\Crit{\text{Crit}}
\renewcommand{\(}{\left(}
\renewcommand{\)}{\right)}
\nc\inject{\hookrightarrow}
\nc{\mattwo}[4]{\left[\begin{array}{cc} #1  & #2\\  #3 & #4 \\ \end{array} \right]}
\nc{\matthree}[9]{\left[\begin{array}{ccc} #1  & #2 & #3\\  #4 & #5 & #6 \\ #7 & #8 & #9 \\ \end{array} \right]}
\nc{\vecttwo}[2]{\left[\begin{array}{c} #1 \\ #2 \\ \end{array} \right]}
\nc{\vectthree}[3]{\left[\begin{array}{c} #1 \\ #2\\  #3 \\ \end{array} \right]}
\nc{\del}{\partial}
\nc\onto{\twoheadrightarrow}
\nc\const{\text{const}}
\nc\rrp{\rr P}
\nc\ul{\underline}
\nc\ol{\overline}
\nc\uline{\underline}
\nc\oline{\overline}
\nc\oset{\overset}
\nc\uset{\underset}
\nc\heart{\heartsuit}
\nc\spade{\spadesuit}
\nc\club{\clubsuite}
\nc\marg[1]{\marginnote{\boxed{\text{#1}}}}
\nc\margq[1]{\marginnote{\textcolor[rgb]{1.00,0.00,0.00}{#1}}}
\nc\Char{\text{Char}}
\nc\Frac{\text{Frac}}
\nc\wo{\backslash}
\nc\diag{\text{diag}}
\nc\wtl{\widetilde}
\nc\nsubgp{\triangleleft}
\nc\Cay{\text{Cay}}
\nc\Hom{\text{Hom}}
\nc\Gp{\text{Gp}}
\nc\Set{\text{Set}}
\nc\la{\langle}
\nc\ra{\rangle}
\nc\Spec{\text{Spec}}
\nc\ad{\text{ad}}
\nc\wht{\widehat}
\nc\ddx[2]{\frac{\partial {#1}}{\partial {#2}}}
\nc\dddx[3]{\frac{\partial^2 {#1}}{\partial {#2}\partial{#3}}}
\nc\mult{\text{mult}}
\nc\supp{\text{supp}}
\nc\sign{\text{sign}}
\nc\tr{\text{tr}}
\nc\stab{\text{stab}}
\nc\im{\text{im}}
\nc\sech{\text{sech}}
\nc\Ind{\text{Ind}}
\nc\tsf{\text{sf}}
\nc\End{\text{End}}
\nc\Hol{\text{Hol}}
\nc\hol{\text{hol}}
\nc\Lie{\text{Lie}}
\nc\vol{\text{vol}}
\nc\ind{\text{ind}}
\nc\Met{\mathcal{M}\text{et}}
\nc\Grass{\mathcal{G}\text{rass}}
\nc\longto{\longrightarrow}
\nc\grad{\text{grad}}
\nc\Map{\text{Map}}
\nc\Indx{\text{Ind}}
\nc\indx{\text{ind}}
\nc\pt{\text{pt}}
\nc\Aut{\text{Aut}}
\nc\Br{\text{Br}}
\nc\Gr{\text{Gr}}
\nc\CS{\text{CS}}
\nc\Proj{\text{Proj}}
\nc\Ad{\text{Ad}}
\nc\Diff{\text{Diff}}
\nc\sing{\text{sing}}
\nc\order{\text{order}}
\nc\bsl{\backslash}
\nc\dom{\text{dom}}
\nc\Tor{\text{Tor}}
\title{On the Kronheimer-Mrowka concordance invariant}
\author{Sherry Gong}
\date{}
\begin{document}
\maketitle

\begin{abstract}

Kronheimer and Mrowka introduced a new knot invariant, called $s^\sharp$, which is a gauge theoretic analogue of Rasmussen's $s$ invariant. In this article, we compute Kronheimer and Mrowka's invariant for some classes of knots, including algebraic knots and the connected sums of quasi-positive knots with non-trivial right handed torus knots. These computations reveal some unexpected phenomena: we show that $s^\sharp$ does not have to agree with $s$, and that $s^\sharp$ is not additive under connected sums of knots.

Inspired by our computations, we separate the invariant $s^\sharp$ into two new invariants for a knot $K$, $s^\sharp_+(K)$ and $s^\sharp_-(K)$, whose sum is $s^\sharp(K)$. We show that their difference satisfies $0 \leq s^\sharp_+(K) - s^\sharp_-(K) \leq 2$. This difference may be of independent interest.

We also construct two link concordance invariants that generalize $s^\sharp_\pm$, one of which we continue to call $s^\sharp_\pm$, and the other of which we call $s^\sharp_I$. To construct these generalizations, we give a new characterization of $s^\sharp$ using immersed cobordisms rather than embedded cobordisms. We prove some inequalities relating the genus of a cobordism between two links and the invariant $s^\sharp$ of the links. Finally, we compute $s^\sharp_\pm$ and $s^\sharp_I$ for torus links.

\end{abstract}

\section{Introduction}

In \cite{gtri}, Kronheimer and Mrowka introduced a knot invariant, $s^\sharp$, based on a variant of singular instanton homology with local systems. It is analogous to Rasmussen's invariant, $s$, as defined in \cite{rasmussen}, and it shares some properties with $s$, such as providing a lower bound for the smooth slice genus. More generally, in \cite{gtri}, Kronheimer and Mrowka showed that $s^\sharp$ provides a lower bound for the genus of a surface in any negative definite 4-manifold with $b_1=0$.  Like $s$, the invariant $s^\sharp$ defines a map from the group of smooth concordance classes of knots to $\zz$, though we will see in this paper that, unlike the map that $s$ induces, the map that $s^\sharp$ induces is not a homomorphism.

In this paper we introduce two natural generalizations of $s^\sharp$ to links, one of which is analogous to the $s$ invariants for links defined by Pardon in \cite{pardon}; this version gives us $2^l$ invariants for a link $L$ with $l$ components. Similarly to how $s^\sharp$ was defined for knots in \cite{gtri}, $s^\sharp(L)$ will count the number of factors picked up by the map induced by a cobordism $\Sigma$ from the unlink to $L$ on a version of instanton homology, $I'(L) = I^\sharp(L;\Gamma)/\text{tors}$, which is seen as a module over the ring $\qq[[\lambda]]$. For a cobordism $\Sigma$ from $L_1$ to $L_2$, we write $\psi^\sharp(\Sigma):I'(L_1) \to I'(L_2)$ for the induced map on $I'$.

Let $U_l$ denote the unlink with $l$ components. Then $I'(U_l)$ has $2^l$ generators, which we denote $u_I$ for $I \in \{\pm 1\}^l$. 

\begin{defn} Let $L$ be a link with $l$ components.  Let $\Sigma:U_l \to L$ be a component preserving immersed cobordism that has even genus on each component. Then we define the invariant $s^\sharp_I(L)$ for $I \in \{\pm 1\}^l$ to be
\[s^\sharp_I(L) = g(\Sigma) + p(\Sigma) - m_I(\Sigma)\]
where $g$ is the genus, $p$ is the number of positive double points, and $m_I(\Sigma)$ is the largest integer for which we may write $\psi^\sharp(\Sigma)(u_I) = \lambda^{m_I(\Sigma)}v_I$ for an element $v_I \in I'(L)$.
\end{defn}

We will show that $s^\sharp_I(L)$ is independent of the choice of $\Sigma$ in this definition, and we will also see how to compute it in terms of component-preserving cobordisms that do not have to have even genus on each component; it will be essentially the same, but involve more book-keeping.

We will show that these invariants satisfy an inequality for links related by component-preserving cobordisms. 
\begin{thm} Let $\Sigma:L_1 \to L_2$ be a component-preserving cobordism between links with $l$ components. Then 
\[s^\sharp_I(L_2)-s^\sharp_{(-1)^g(I)}(L_1) \leq \sum_k 2 \(\left\lfloor \frac{g_k(\Sigma )}{2}\right\rfloor + c_k(\Sigma,I)\) + p(\Sigma).\]
$g_k(\Sigma)$ is the genus of the $k$th component, $c_k$ is a book-keeping term that is either $0$ or $1$, $p(\Sigma)$ is the number of positive double points of $\Sigma$, and $(-1)^g$ is a map on indices involving the parity of the genera of the components of $\Sigma$.
\label{thm_s_sharp_I_inequality}
\end{thm}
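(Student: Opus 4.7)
The argument is by composition of cobordisms and functoriality of $\psi^\sharp$, combined with a generalized form of the definition of $s^\sharp_I$ that allows cobordisms of arbitrary parities on the components (the one promised in the discussion immediately after the definition).

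Choose a component-preserving immersed cobordism $\Sigma_0 : U_l \to L_1$ with even genus on each component that realizes the invariant for $L_1$, so that
\[
\psi^\sharp(\Sigma_0)(u_{(-1)^g(I)}) = \lambda^{m_0} w, \qquad m_0 = g(\Sigma_0) + p(\Sigma_0) - s^\sharp_{(-1)^g(I)}(L_1),
\]
for some $w \in I'(L_1)$. Form the concatenation $\Sigma' := \Sigma \cup \Sigma_0 : U_l \to L_2$. By functoriality of $\psi^\sharp$,
\[
\psi^\sharp(\Sigma')(u_{(-1)^g(I)}) = \lambda^{m_0}\, \psi^\sharp(\Sigma)(w),
\]
so $m_{(-1)^g(I)}(\Sigma') \geq m_0$. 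Because $g_k(\Sigma_0)$ is even, $g_k(\Sigma')$ has the same parity as $g_k(\Sigma)$; in particular $(-1)^{g(\Sigma')}(I) = (-1)^g(I)$ and $c_k(\Sigma', I) = c_k(\Sigma, I)$.

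The main technical input is the generalized definition of $s^\sharp_I$: for any component-preserving immersed cobordism $\Sigma'' : U_l \to L$ with arbitrary parities on the components,
\[
s^\sharp_I(L) \leq \sum_k 2 \lfloor g_k(\Sigma'')/2 \rfloor + p(\Sigma'') + 2\sum_k c_k(\Sigma'', I) - m_{(-1)^{g(\Sigma'')}(I)}(\Sigma'').
\]
Establishing this formula is the hard step: one analyzes the effect of attaching an extra $1$-handle to one component on the induced map on $I'(U_l)$, showing that it swaps the two generators differing in the $k$th coordinate up to a factor $\lambda^{c_k(\Sigma'', I)}$ determined by $I_k$ and the local model. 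Each such handle simultaneously raises the genus of that component by $1$ (explaining the floor function) and absorbs the discrepancy in indices (accounting for $2c_k$).

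Applying the generalized formula to $\Sigma'' = \Sigma'$, using $g_k(\Sigma_0)$ even to obtain $\sum_k 2 \lfloor g_k(\Sigma')/2 \rfloor = g(\Sigma_0) + \sum_k 2\lfloor g_k(\Sigma)/2 \rfloor$ and $p(\Sigma') = p(\Sigma_0) + p(\Sigma)$, together with $m_{(-1)^g(I)}(\Sigma') \geq m_0$, gives
\[
s^\sharp_I(L_2) \leq \bigl[g(\Sigma_0) + p(\Sigma_0) - m_0\bigr] + \sum_k 2\bigl(\lfloor g_k(\Sigma)/2\rfloor + c_k(\Sigma,I)\bigr) + p(\Sigma),
\]
which is the claimed inequality, since the bracketed term equals $s^\sharp_{(-1)^g(I)}(L_1)$. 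The main obstacle is therefore the handle-swap lemma underlying the generalized definition.
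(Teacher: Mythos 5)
Your proof is correct and takes essentially the same route as the paper: compose $\Sigma$ with an even-genus cobordism $\Sigma_0$ from the unlink to $L_1$, observe that post-composition with $\Sigma$ can only increase the number of factors of $\lambda$ so $m_{(-1)^g(I)}(\Sigma') \geq m_0$, and unwind the definitions. The one thing to note is that what you flag as the ``hard step'' --- the formula for $s^\sharp_I$ computed from a component-preserving cobordism with arbitrary genus parities --- is not a remaining obstacle: it is exactly Definition~\ref{s_sharp_I_defn}, and its well-definedness is proved in the proposition immediately following it (which comes before this theorem in the paper). So you can take it with equality rather than as an inequality, and the ``handle-swap'' behavior you describe ($\psi^\sharp(T)(u_+) = 2u_-$, $\psi^\sharp(T)(u_-) = 2\lambda^2 u_+$) is precisely the computation that proposition uses to establish well-definedness; you are simply re-invoking a step the paper has already discharged.
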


The other $s^\sharp$ invariants for links that we construct are analogous to the link $s$ invariant defined by Beliakova and Wehrli in \cite{beliakova_wehrli}. They are defined as follows.

\begin{defn} Let $U$ denote the unknot. For a link $L$, let $\Sigma:U \to L$ be an immersed cobordism. The group $I'(U)$ is a free module of rank two over $\qq[[\lambda]]$. Let $u_+$ and $u_-$ denote its generators. Let $g(\Sigma)$ denote the genus of $\Sigma$, $p(\Sigma)$ denote its number of positive double points, and $m_\pm(\Sigma)$ denote the largest integer for which we may write $\psi^\sharp(\Sigma)(u_\pm) = \lambda^{m_\pm(\Sigma)}v$ for an element $v \in I'(L)$.

Then we define the invariant $s^\sharp_\pm(L)$
\[s^\sharp_\pm(L) =\begin{cases} g(\Sigma) + p(\Sigma) - m_\pm(\Sigma) & \text{if $\Sigma$ has even genus} \\ g(\Sigma) + p(\Sigma) - m_\mp(\Sigma)\pm 1  & \text{else.} \end{cases}\]
\end{defn}

We will also combine these two invariants into a single invariant $s^\sharp(L) = s^\sharp_+(L)+ s^\sharp_-(L)$, which will be analogous to Kronheimer and Mrowka's $s^\sharp$ for knots. We will show the following inequality.
\begin{thm} Let $\Sigma:L_1 \to L_2$ be an immersed cobordism such that every component of $\Sigma$ has non-trivial boundary in $L_1$. Then 
\[s^\sharp(L_2) - s^\sharp(L_1) \leq 2p(\Sigma) - \chi(\Sigma) +\ell, \]
where $\ell = \#(L_1) -\#(L_2)$ where $\#(L_i)$ is the number of components of $L_i$. If $\Sigma$ is embedded, then $p(\Sigma) = 0$. 
\label{thm_s_sharp_cobordism_ineq}
\end{thm}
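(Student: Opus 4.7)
The plan is to reduce the inequality to a single composition of immersed cobordisms starting from the unknot. The first observation is that if we compute both $s^\sharp_+(L)$ and $s^\sharp_-(L)$ using a single cobordism $\Sigma_0:U\to L$, the parity correction $\pm 1$ appearing in the odd-genus case cancels in the sum, producing the uniform formula
\[
s^\sharp(L) \;=\; 2g(\Sigma_0) + 2p(\Sigma_0) - m_+(\Sigma_0) - m_-(\Sigma_0)
\]
regardless of the parity of $g(\Sigma_0)$. This removes any case analysis on parity.

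Next I would pick any immersed cobordism $\Sigma_1:U\to L_1$ with each component touching $U$, and set $\widetilde{\Sigma} = \Sigma\circ\Sigma_1:U\to L_2$. The hypothesis that every component of $\Sigma$ has non-trivial boundary in $L_1$ guarantees that every component of $\widetilde{\Sigma}$ meets $U$, so $\widetilde{\Sigma}$ is a legitimate cobordism for computing $s^\sharp(L_2)$. Functoriality of $\psi^\sharp$ gives
\[
\psi^\sharp(\widetilde{\Sigma})(u_\pm) \;=\; \lambda^{m_\pm(\Sigma_1)}\,\psi^\sharp(\Sigma)(v_\pm),
\]
so $m_\pm(\widetilde{\Sigma}) \geq m_\pm(\Sigma_1)$.

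For the topological bookkeeping, $p$ and $\chi$ are additive under gluing along the circles of $L_1$, giving $p(\widetilde{\Sigma}) = p(\Sigma_1) + p(\Sigma)$ and $\chi(\widetilde{\Sigma}) = \chi(\Sigma_1) + \chi(\Sigma)$. Applying $2g(S) = 2c(S) - \chi(S) - b(S)$ (for an oriented surface $S$ with $c$ components and $b$ boundary circles) to both $\widetilde{\Sigma}$ and $\Sigma_1$, and using $b(\widetilde{\Sigma}) - b(\Sigma_1) = \#(L_2) - \#(L_1) = -\ell$, I get
\[
2\bigl(g(\widetilde{\Sigma}) - g(\Sigma_1)\bigr) \;=\; 2\bigl(c(\widetilde{\Sigma}) - c(\Sigma_1)\bigr) - \chi(\Sigma) + \ell.
\]
The boundary hypothesis on $\Sigma$ now enters: it forces every component of $\Sigma$ to attach, through $L_1$, to a component of $\Sigma_1$, so the gluing creates no new components and $c(\widetilde{\Sigma}) \leq c(\Sigma_1)$. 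Substituting into the uniform formula from the first paragraph,
\[
s^\sharp(L_2) - s^\sharp(L_1) \;\leq\; 2\bigl(g(\widetilde{\Sigma}) - g(\Sigma_1)\bigr) + 2p(\Sigma) \;\leq\; -\chi(\Sigma) + \ell + 2p(\Sigma),
\]
which is the claimed bound. The embedded case follows since then $p(\Sigma) = 0$.

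The main obstacle is the component-counting step $c(\widetilde{\Sigma}) \leq c(\Sigma_1)$: this is precisely where the boundary hypothesis on $\Sigma$ is essential, because otherwise a closed component of $\Sigma$ could survive in $\widetilde{\Sigma}$, both adding components and contributing uncontrolled factors to $\psi^\sharp$ that would break the bound. A secondary technical point is that the parity cancellation in the first step requires the $\Sigma$-independence of $s^\sharp_\pm$, which is established earlier in the paper.
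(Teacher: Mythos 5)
Your proposal is correct and follows essentially the same route as the paper's proof: choose a connected $\Sigma_1:U\to L_1$, use functoriality to get $m_\pm(\Sigma\circ\Sigma_1)\geq m_\pm(\Sigma_1)$, observe that the parity corrections cancel in $s^\sharp_++s^\sharp_-$, and convert the genus difference into $-\chi(\Sigma)+\ell$ via the Euler characteristic. Your explicit $2g = 2c - \chi - b$ bookkeeping simply fills in the arithmetic that the paper states without derivation, and your connectedness discussion (forced by the boundary hypothesis on $\Sigma$) matches the paper's choice of $\Sigma_1$ connected.
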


Finally, we compute the invariant $s^\sharp$ for algebraic knots, connected sums of quasi-positive knots with torus knots, and torus links. We will do this primarily using results about complex curves. We will say that a complex curve is irreducible in a ball if its intersection with the ball has only one irreducible component. This condition is neither stronger nor weaker than the condition that the curve be irreducible.  We will show the following lemma.
\begin{lemma} If there is complex curve $\Sigma \subset B^4 \subset \cc^2$, which is irreducible in $B^4$ with boundary $L \subset S^3$ that is embedded away from one transverse double point, then $s^\sharp_+(L)= g$ and $s^\sharp_-(L) = g-1$, where $g$ is the genus of an embedded surface that is a perturbation of $\Sigma$ as a complex curve (eg. by replacing the curve $P(x,y) = 0$ with $P(x,y) +\epsilon = 0$ for a small number $\epsilon$).
\label{lemma_curve_with_double_pt}
\end{lemma}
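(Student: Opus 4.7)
My plan is to apply the definition of $s^\sharp_\pm$ to the cobordism $\widetilde{\Sigma} : U \to L$ formed by stacking a capping disk $D$ for the unknot (a cobordism $U \to \emptyset$) below the complex curve $\Sigma$ (viewed as a cobordism $\emptyset \to L$). As a surface, $\widetilde{\Sigma}$ is the disjoint union of $D$ and $\Sigma$, so $g(\widetilde{\Sigma}) = g-1$; the genus $g-1$ matches the hypothesis because oriented smoothing of the transverse double point adds a handle to the abstract surface, producing the embedded perturbation $\Sigma^\epsilon$ of genus $g$. The unique double point of $\widetilde{\Sigma}$ is automatically positive because transverse intersections of complex submanifolds of $\cc^2$ carry the complex orientation, so $p(\widetilde{\Sigma}) = 1$. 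Substituting $g(\widetilde{\Sigma}) = g-1$ and $p(\widetilde{\Sigma}) = 1$ into the defining formula for $s^\sharp_\pm(L)$, and separating the two parities of $g - 1$, the lemma reduces in both cases to the two divisibility statements
\[m_+(\widetilde{\Sigma}) = 0 \qquad\text{and}\qquad m_-(\widetilde{\Sigma}) = 1.\]

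Next I would exploit the factorization $\psi^\sharp(\widetilde{\Sigma}) = \psi^\sharp(\Sigma) \circ \psi^\sharp(D)$. Writing $a_\pm := \psi^\sharp(D)(u_\pm) \in \qq[[\lambda]]$ and $\alpha := \psi^\sharp(\Sigma)(1) \in I'(L)$, one has $\psi^\sharp(\widetilde{\Sigma})(u_\pm) = a_\pm \cdot \alpha$, so that $m_\pm(\widetilde{\Sigma}) = v_\lambda(a_\pm) + v_\lambda(\alpha)$, where $v_\lambda$ denotes the $\lambda$-adic valuation. The two divisibility claims then split into two independent ingredients: (i) the cap computation $v_\lambda(a_+) = 0$ and $v_\lambda(a_-) = 1$, which is a standard calculation for the counit on $I^\sharp(U;\Gamma)/\mathrm{tors}$ in the conventions the paper uses to define the generators $u_\pm$ (this asymmetry is precisely what makes $s^\sharp_+$ and $s^\sharp_-$ distinct invariants in the first place); and (ii) $\alpha$ is not divisible by $\lambda$, i.e., $v_\lambda(\alpha) = 0$. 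Taken together, (i) and (ii) give exactly $m_+(\widetilde{\Sigma}) = 0$ and $m_-(\widetilde{\Sigma}) = 1$.

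The main obstacle is (ii), the positivity statement that $\alpha = \psi^\sharp(\Sigma)(1)$ is not divisible by $\lambda$. I would prove this by comparing $\Sigma$ with its embedded perturbation $\Sigma^\epsilon$. The two cobordisms differ only by an oriented smoothing at the double point, and a local cobordism-calculus identity — a special case of the new characterization of $s^\sharp$ via immersed cobordisms developed earlier in the paper — relates $\psi^\sharp(\Sigma)(1)$ and $\psi^\sharp(\Sigma^\epsilon)(1)$ up to a unit factor in $\qq[[\lambda]]$. The non-divisibility of $\psi^\sharp(\Sigma^\epsilon)(1)$ then follows from positivity for embedded complex curves: modulo $\lambda$, the map induced by a holomorphic cobordism on the reduced instanton invariant represents the canonical class and in particular is non-zero. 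This is the same positivity input used by the paper to handle embedded algebraic boundaries, and once it is imported here the required $v_\lambda(\alpha) = 0$ follows, completing the reduction to (i) and hence the proof.
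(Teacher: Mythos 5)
Your construction of $\widetilde\Sigma$ as the disjoint union of a capping disk $D$ for the unknot and the complex curve $\Sigma$ does not fit the framework of the definition: $s^\sharp_\pm(L)$ is defined using a \emph{connected} cobordism from $U$ to $L$, but $\widetilde\Sigma = D \sqcup \Sigma$ has two components. This is not a cosmetic issue. The paper instead forms the connected cobordism $\Sigma^0\colon U\to L$ by \emph{puncturing} $\Sigma$ at an interior point, which has Euler characteristic $\chi(\Sigma)-1$, whereas $\chi(\widetilde\Sigma)=\chi(\Sigma)+1$; the two surfaces are not homotopic and $\widetilde\Sigma$ cannot be substituted for $\Sigma^0$ in the defining formula.

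The second problem is the cap computation that your reduction hinges on. You claim $v_\lambda\bigl(\psi^\sharp(D)(u_+)\bigr)=0$ and $v_\lambda\bigl(\psi^\sharp(D)(u_-)\bigr)=1$, and assert that this asymmetry ``is precisely what makes $s^\sharp_+$ and $s^\sharp_-$ distinct.'' That is not where the asymmetry comes from. The generators $u_\pm$ of $I'(U)$ lie in the two distinct $\zz/4$-gradings $\pm1$, while $I'(\emptyset)\cong\qq[[\lambda]]$ has rank one and therefore is concentrated in a single $\zz/4$-degree. A graded map $\psi^\sharp(D)\colon I'(U)\to I'(\emptyset)$ must therefore annihilate one of $u_+,u_-$ entirely; your two scalars $a_\pm$ cannot both be nonzero, so $m_\pm(\widetilde\Sigma)$ is undefined (infinite) on one side and the intended substitution into the $s^\sharp_\pm$ formula breaks down. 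The asymmetry between $s^\sharp_+$ and $s^\sharp_-$ actually arises from the behaviour of the punctured torus $T\colon U\to U$, which sends $u_+\mapsto 2u_-$ and $u_-\mapsto 2\lambda^2 u_+$, and this is the mechanism the paper exploits.

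Concretely, the paper punctures both $\Sigma_1$ (one double point, genus $g-1$) and its embedded perturbation $\Sigma_2$ (no double points, genus $g$) to obtain connected cobordisms $\Sigma_1^0,\Sigma_2^0\colon U\to L$. It then composes $\Sigma_1^0$ with the punctured torus $T$ and $\Sigma_2^0$ with the once-positively-twisted annulus $S$: these composites have equal genus and equal numbers of positive double points, hence induce the same map by the local-move calculus. Comparing the images of $u_\pm$ under the two composites, and feeding in the non-vanishing result of \cite{khgfi} (that neither $\psi^\sharp(\Sigma_1^0)$ nor $\psi^\sharp(\Sigma_2^0)$ is divisible by $\lambda$), forces $m_+=0$ and $m_-=1$ for both cobordisms. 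You do invoke the same non-vanishing input in your step (ii), and that part of the idea is correct; but without the comparison of composite cobordisms and with the flawed cap asymmetry taking its place, the argument does not close. Replacing $\widetilde\Sigma$ with the punctured surface $\Sigma^0$ and then arguing via $\Sigma_1^0\circ T$ versus $\Sigma_2^0\circ S$ as in the paper is the way to repair the proof.
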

The genus of a smooth embedded complex curve is also the minimal genus of a smooth embedded connected surface with boundary $L$, as can be shown using $s^\sharp$ or by other methods. 

We will then construct such irreducible complex curves for the following links:
\begin{itemize}
\item Non-trivial algebraic knots, including non-trivial right-handed torus knots
\item Right-handed torus links $T_{m,n}$ other than $T_{2,2}$
\item Knots which are connected sums $K \# T_{p,q}$ where $K$ is a quasi-positive knot, and $T_{p,q}$ is a non-trivial right-handed torus knot.
\end{itemize}

As Kronheimer and Mrowka did for the $s^\sharp$ invariant of knots, observe that we have added $s^\sharp_+$ and $s^\sharp_-$ to get $s^\sharp$. However, it may also be of interest to consider $s^\sharp_+(L)-s^\sharp_-(L)$. We will show the following.

\begin{propn} For any link $L$, 
\[0 \leq s^\sharp_+(L) -s^\sharp_-(L) \leq 2.\]
\label{propn_s_plus_minus_difference}
\end{propn}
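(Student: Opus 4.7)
I plan to reduce the proposition to a single two-sided inequality on the divisibilities $m_\pm(\Sigma)$ of a fixed immersed cobordism $\Sigma: U \to L$, and then to exploit composition with a genus-one cobordism to symmetrize the bound. Expanding the piecewise definition of $s^\sharp_\pm$ gives
\[
s^\sharp_+(L) - s^\sharp_-(L) = \begin{cases} m_-(\Sigma) - m_+(\Sigma), & g(\Sigma)\text{ even,} \\ m_+(\Sigma) - m_-(\Sigma) + 2, & g(\Sigma)\text{ odd,} \end{cases}
\]
so in either parity the claim $0 \leq s^\sharp_+(L) - s^\sharp_-(L) \leq 2$ becomes the single uniform bound
\[
m_+(\Sigma) \;\leq\; m_-(\Sigma) \;\leq\; m_+(\Sigma) + 2.
\]
Independence of $s^\sharp_\pm(L)$ from the choice of $\Sigma$, proven earlier in the paper, ensures that it is enough to establish this bound for any one convenient representative cobordism.

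The key input is the action on $I'(U)$ of the elementary genus-one cobordism $T: U \to U$ obtained by attaching a single handle to the cylinder. From the structural results of Kronheimer-Mrowka \cite{gtri}, $\psi^\sharp(T)$ has matrix $\bigl(\begin{smallmatrix} 0 & \lambda^2 \\ 1 & 0 \end{smallmatrix}\bigr)$ in the basis $(u_+, u_-)$; that is, $\psi^\sharp(T)(u_+) = u_-$ and $\psi^\sharp(T)(u_-) = \lambda^2 u_+$. Pre-composing $\Sigma$ with $T$ therefore gives $m_+(\Sigma \circ T) = m_-(\Sigma)$ and $m_-(\Sigma \circ T) = m_+(\Sigma) + 2$, so the upper half of the target inequality for $\Sigma \circ T$ is precisely the lower half for $\Sigma$, and vice versa; establishing either half for all $\Sigma$ automatically gives the other. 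It thus suffices to prove just the upper bound $m_-(\Sigma) \leq m_+(\Sigma) + 2$ for every immersed cobordism $\Sigma$. For this final step I would pass to the $\qq[[\lambda]]$-submodule of the torsion-free module $I'(L) = I^\sharp(L;\Gamma)/\text{tors}$ generated by $\psi^\sharp(\Sigma)(u_+)$ and $\psi^\sharp(\Sigma)(u_-)$: the identity $\psi^\sharp(\Sigma \circ T)(u_-) = \lambda^2 \psi^\sharp(\Sigma)(u_+)$ forces a relation between the $\lambda$-valuations of the two generators which, combined with the structure theorem over the PID $\qq[[\lambda]]$, yields $m_-(\Sigma) \leq m_+(\Sigma) + 2$.

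The main obstacle is the matrix computation for $T$, which is a direct calculation in Kronheimer-Mrowka's singular instanton theory with local system $\Gamma$ and should be extractable from the structural results of \cite{gtri}. A secondary technical point is making the module-theoretic step at the end of the previous paragraph rigorous, since the valuation argument depends on the precise form of the submodule of $I'(L)$ generated by the images of $u_\pm$ and on $I'(L)$ being torsion-free. Once these pieces are in place, the parity reduction at the start together with the $T$-composition symmetry assemble them into the uniform inequality $0 \leq s^\sharp_+(L) - s^\sharp_-(L) \leq 2$.
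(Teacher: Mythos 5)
Your parity reduction and the composition-with-$T$ symmetry argument are both correct and genuinely cleaner in form than the paper's proof: once you notice that precomposing with the genus-one cobordism $T$ (acting up to units by $u_+ \mapsto u_-$, $u_- \mapsto \lambda^2 u_+$) swaps the two halves of the inequality, you only need to establish one of them. The paper instead proves both halves directly, without exploiting this duality.

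However, the final step of your proposal has a genuine gap. The identity you invoke, $\psi^\sharp(\Sigma \circ T)(u_-) = \lambda^2 \psi^\sharp(\Sigma)(u_+)$, is simply a restatement of how $T$ acts: it tells you $m_-(\Sigma \circ T) = m_+(\Sigma) + 2$, which you already know, and it says nothing whatsoever about the relation between $m_+(\Sigma)$ and $m_-(\Sigma)$ for the \emph{same} cobordism $\Sigma$. The $\qq[[\lambda]]$-submodule of $I'(L)$ generated by $\psi^\sharp(\Sigma)(u_+)$ and $\psi^\sharp(\Sigma)(u_-)$ is just a free submodule of rank at most 2; the structure theorem over a DVR imposes no constraint at all on the $\lambda$-valuations of two arbitrary generators. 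There is nothing in the $T$-composition alone that forces $m_-(\Sigma) \leq m_+(\Sigma) + 2$.

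The ingredient you are missing is precisely what drives the paper's argument: the merge cobordism $\mu_L : U \sqcup L \to L$ and the natural isomorphism $I'(U \sqcup L) \cong I'(U) \otimes I'(L)$. The homotopy $\mu_L \circ (\Id \sqcup \Sigma) \simeq \Sigma \circ \mu_U$ (same number of positive double points) lets one write, for $\Sigma$ of even genus,
\[
\lambda^{m_+(\Sigma)}\,\psi^\sharp(\mu_L)(u_- \otimes v_+) = \psi^\sharp(\Sigma)(u_-) = \lambda^{m_-(\Sigma)} v_-,
\]
using $\psi^\sharp(\mu_U)(u_- \otimes u_+) = u_-$; since $v_-$ is not $\lambda$-divisible and $I'(L)$ is torsion-free, this gives $m_+(\Sigma) \le m_-(\Sigma)$. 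An analogous computation with $\psi^\sharp(\mu_U)(u_- \otimes u_-) = \lambda^2 u_+$ gives the other half. Either one of these, combined with your $T$-symmetry, would finish the proof; but both require the Frobenius-type algebra structure on $I'(U)$ and the functorial merge map, not just the cylinder-with-handle.
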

For the unknot, we have $s^\sharp_+(L)-s^\sharp_-(L) = 0$, and for any links that fit into the paradigm of Lemma \ref{lemma_curve_with_double_pt}, we have $s^\sharp_+(L)-s^\sharp_-(L) = 1$. 

We can think of $s^\sharp_+(L) - s^\sharp_-(L) \in \{0,1,2\}$ as a link invariant. In the case of a knot, this link invariant can also be characterised as follows. 

\begin{propn} Let $C$ denote the cylinder from $K \times I \subset S^3 \times I$. Let $C'$ denote the connected sum of $C$ with a standard embedded torus, so that $C'$ has genus $1$. Then, the map that $C'$ induces on $I'$ is given by
\[ \psi^\sharp(C') \sim \begin{cases} 
\mattwo{0}{\lambda^2}{1}{0} & \text{if } s^\sharp_+(L) -s^\sharp_-(L) = 0\\
 \mattwo{0}{\lambda}{\lambda}{0} & \text{if } s^\sharp_+(L) -s^\sharp_-(L) = 1\\
 \mattwo{0}{1}{\lambda^2}{0}  & \text{if } s^\sharp_+(L) -s^\sharp_-(L) = 2
\end{cases}\]
in the $u_+,u_-$ basis, where $\sim$ means that the entries are up to scaling by non-multiples of $\lambda$.
\label{propn:difference_as_matrix}
\end{propn}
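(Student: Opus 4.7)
The plan is to determine the matrix of $\psi^\sharp(C'): I'(K) \to I'(K)$ in the $u_+, u_-$ basis by first computing the $\lambda$-divisibilities of the images of $u_+$ and $u_-$ via the definition of $s^\sharp_\pm$, and then separately establishing that the matrix is anti-diagonal. The key input for the first part is a single immersed cobordism $\Sigma : U \to K$ of even genus, which simultaneously realizes both $s^\sharp_+(K)$ and $s^\sharp_-(K)$ via the even-genus clause of the definition.

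To compute the $\lambda$-divisibilities, write $\psi^\sharp(\Sigma)(u_\pm) = \lambda^{m_\pm(\Sigma)} v_\pm$ with $v_\pm \in I'(K)$ not divisible by $\lambda$, so the even-genus formula gives $m_\pm(\Sigma) = g(\Sigma) + p(\Sigma) - s^\sharp_\pm(K)$. The composition $C' \circ \Sigma : U \to K$ has odd genus $g(\Sigma) + 1$, so the odd-genus clause of the definition determines
\[ m_+(C' \circ \Sigma) = g(\Sigma) + p(\Sigma) - s^\sharp_-(K), \qquad m_-(C' \circ \Sigma) = g(\Sigma) + p(\Sigma) - s^\sharp_+(K) + 2. \]
Combining with $\psi^\sharp(C' \circ \Sigma) = \psi^\sharp(C') \circ \psi^\sharp(\Sigma)$ and comparing powers of $\lambda$ then yields
\[ \psi^\sharp(C')(v_+) = \lambda^{s^\sharp_+(K) - s^\sharp_-(K)}\, w_+, \qquad \psi^\sharp(C')(v_-) = \lambda^{2 - (s^\sharp_+(K) - s^\sharp_-(K))}\, w_-, \]
with $w_\pm \in I'(K)$ not divisible by $\lambda$. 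Identifying $v_\pm$ with the basis elements $u_\pm$ of $I'(K)$ up to units (a convention built into the definition of this basis), we recover exactly the claimed $\lambda$-divisibilities on the two off-diagonal entries in all three cases $s^\sharp_+(K) - s^\sharp_-(K) \in \{0, 1, 2\}$.

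The final and most substantial step is to show that the matrix is anti-diagonal: that $w_+$ is a unit multiple of $u_-$ and $w_-$ a unit multiple of $u_+$, so that the diagonal entries vanish. The computation above pins down only the $\lambda$-divisibility of the images, not their directions in $I'(K)$. I would address this by first handling the base case $K = U$: a direct computation of the torus operator on $I'(U)$ using the skein-theoretic structure of Kronheimer-Mrowka's instanton homology for the unknot should yield $\mattwo{0}{\lambda^2}{1}{0}$ up to units, consistent with $s^\sharp_+(U) - s^\sharp_-(U) = 0$. Naturality of the genus-$1$ handle along a cobordism $U \to K$ then propagates the anti-diagonal structure to arbitrary knots. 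Making this naturality precise --- or equivalently, decomposing the genus-$1$ handle as two $1$-handles and directly computing the composite skein operator --- is the main technical obstacle.
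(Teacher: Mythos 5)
Your main computational line is the same as the paper's: for an even-genus cobordism $\Sigma:U\to K$, compare $\psi^\sharp(C'\circ\Sigma)$ with $\psi^\sharp(\Sigma\circ T)$ (where $T:U\to U$ is the standard genus-one cobordism), use the even-genus clause of the definition on $\Sigma$ and the odd-genus clause on $C'\circ\Sigma$, and deduce that the off-anti-diagonal entries have $\lambda$-orders $s^\sharp_+(K)-s^\sharp_-(K)$ and $2-(s^\sharp_+(K)-s^\sharp_-(K))$. That part is correct and matches the paper exactly.

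Where you go astray is in treating anti-diagonality as a ``most substantial step'' requiring a base case $K=U$ plus a naturality argument. In fact this is already built into the framework recalled in the background section, and you implicitly used it in your own computation. The module $I^\sharp(K;\Gamma)$ carries a $\zz/4$ grading, $I'(K)$ has a single generator $v_+$ in degree $+1$ and a single generator $v_-$ in degree $-1$, and the Kronheimer--Mrowka formulae quoted in the background say that a cobordism $U\to K$ of even genus carries $u_\pm$ into the $v_\pm$ summand, while a cobordism of odd genus carries $u_\pm$ into the $v_\mp$ summand. Applying the odd-genus clause to $C'\circ\Sigma$ gives $\psi^\sharp(C'\circ\Sigma)(u_+)=\lambda^{m_+(C'\circ\Sigma)}v_-$, and since $\psi^\sharp(\Sigma)(u_+)=\lambda^{m_+(\Sigma)}v_+$ with $\lambda^{m_+(\Sigma)}$ a non-zero-divisor, it follows directly that $\psi^\sharp(C')(v_+)$ is a multiple of $v_-$; likewise $\psi^\sharp(C')(v_-)$ is a multiple of $v_+$. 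In other words, $C'$ shifts the mod-4 grading by two, which is the entirety of the anti-diagonality claim. Your proposed detour through a direct skein computation for $I'(U)$ and a naturality argument is not needed, and your statement that ``the computation above pins down only the $\lambda$-divisibility of the images, not their directions'' is incorrect: the odd-genus clause already pins down the direction.

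So the proposal is right in substance, but it invents an obstacle where none exists; once you invoke the $\zz/4$-grading statement from the background, the elements $w_\pm$ in your formulae are identified with $v_\mp$ on the nose (up to units), and the proof is complete.
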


We will use similar arguments to prove the following inequalities for quasi-positive links.

\begin{propn} Let $g$ be the 4-ball genus of a quasi-positive knot $K$. Then if $g$ is even, $s^\sharp_+(K) = g$ and $g-1 \leq s^\sharp_-(K)  \leq g$. If $g$ is odd, then $g \leq s^\sharp_+(K) \leq g+1$ and  $s^\sharp_-(K) = g-1$.
\label{propn_quasi-positive_inequality}
\end{propn}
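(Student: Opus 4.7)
The plan is to use the quasi-positive surface of $K$ as the distinguished cobordism $U\to K$ in the definition of $s^\sharp_\pm(K)$, and to adapt the complex-geometric argument of Lemma~\ref{lemma_curve_with_double_pt} to the smooth (nodeless) setting.

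Since $K$ is quasi-positive of $4$-ball genus $g$, it bounds a smooth embedded complex curve $\Sigma_0\subset B^4$ of genus $g$, e.g.\ the Bennequin surface of a quasi-positive braid representative. Removing a small open disk about an interior point yields an embedded cobordism $\Sigma:U\to K$ of genus $g$ with $p(\Sigma)=0$. By the defining formula for $s^\sharp_\pm$,
\[ s^\sharp_\pm(K)=\begin{cases} g - m_\pm(\Sigma) & g\text{ even,}\\ g - m_\mp(\Sigma)\pm 1 & g\text{ odd.}\end{cases}\]
Reading off the four bounds in the proposition, the entire statement reduces to the two inequalities
\[ m_+(\Sigma)=0 \qquad\text{and}\qquad m_-(\Sigma)\leq 1.\]

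I would prove both by transporting the local complex-geometric argument used for Lemma~\ref{lemma_curve_with_double_pt}. In that lemma the conclusions $m_+=0$ and $m_-=1$ were established for a cobordism that was a nodal complex curve; the double point did not enter essentially into either bound, it merely saturated the second one. The argument for $m_+(\Sigma)=0$ should use only the positivity of the complex cobordism: the map $\psi^\sharp(\Sigma)$ sends the positive generator $u_+\in I'(U)$ to a class that survives reduction mod $\lambda$, and this can be detected by a model computation in a regular neighborhood of $\Sigma$. The bound $m_-(\Sigma)\leq 1$ should follow by relating $\psi^\sharp(\Sigma)(u_-)$ to a genus-one modification of $\Sigma$ and showing that one power of $\lambda$ is the most one can extract.

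Given these two inequalities the proposition follows by direct substitution: in the even case $m_+(\Sigma)=0$ gives $s^\sharp_+(K)=g$ and $m_-(\Sigma)\in\{0,1\}$ gives $g-1\leq s^\sharp_-(K)\leq g$; in the odd case $m_+(\Sigma)=0$ gives $s^\sharp_-(K)=g-1$ and $m_-(\Sigma)\in\{0,1\}$ gives $g\leq s^\sharp_+(K)\leq g+1$. The main obstacle is establishing the sharp bound $m_-(\Sigma)\leq 1$: combining $m_+(\Sigma)=0$ with Proposition~\ref{propn_s_plus_minus_difference} yields only $m_-(\Sigma)\leq 2$, so the extra factor of $\lambda$ genuinely has to be ruled out by the complex-geometric input and is the hardest step in the argument.
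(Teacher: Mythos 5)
Your reduction is the right one: taking the quasi-positive surface $\Sigma$ as the distinguished cobordism $U\to K$, everything collapses to the two claims $m_+(\Sigma)=0$ and $m_-(\Sigma)\leq 1$, and the rest of the proposition is bookkeeping. But the route you propose to establish those two claims has a real gap, and it comes from a misreading of how Lemma~\ref{lemma_curve_with_double_pt} works.

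You write that in that lemma ``the double point did not enter essentially into either bound, it merely saturated the second one.'' That is not the case. The proof of Lemma~\ref{one_double_point_lemma} is a comparison argument between \emph{two} cobordisms to $L$: the nodal complex curve $\Sigma_1$ and its embedded perturbation $\Sigma_2$, which has one higher genus. The non-vanishing theorem of Kronheimer--Mrowka only gives $\min(m_+,m_-)=0$ for each of them, without saying which one vanishes; it is precisely the equality $\psi^\sharp(\Sigma_1\circ T)=\psi^\sharp(\Sigma_2\circ S)$ (where $T$ is the genus-one cobordism and $S$ is the positive twist) that pins down which of $m_\pm$ is zero, and that equality is possible only because $\Sigma_1$ has one extra double point that compensates for $\Sigma_2$'s extra genus. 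For a general quasi-positive $K$ you have only the embedded surface $\Sigma$; there is no nodal complex curve with boundary $K$ in sight, so there is nothing to compare against, and both steps of your sketch (``a model computation in a regular neighborhood'' for $m_+=0$ and ``a genus-one modification'' for $m_-\leq 1$) have no content as stated.

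The paper's proof supplies the missing comparison object by passing to a connected sum. Let $T=T_{4,5}$, an algebraic knot of even genus. The knot $K\# T$ is a connected sum of a quasi-positive knot with a nontrivial right-handed torus knot, so by the earlier corollaries it bounds a complex curve $\Sigma_T'$, irreducible in the ball, with exactly one transverse double point, and Lemma~\ref{one_double_point_lemma} applies to $K\#T$: the composite cobordism $U\to K\#T$ has $m_+=0$ and $m_-=1$. On the other hand one can build a cobordism $U\to K\#T$ of the same genus and the same number of positive double points by factoring through $K\amalg T$: take a disk $\emptyset\to U$, then $\Sigma\amalg\Sigma_T$ where $\Sigma_T$ is the nodal curve for $T$, then the connecting one-handle $K\amalg T\to K\#T$. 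This factorization exhibits the number of $\lambda$-factors of $\psi^\sharp(\Sigma)(u_\pm)$ as a summand of the (known) $\lambda$-count for $K\#T$, which forces $m_+(\Sigma)=0$ and $m_-(\Sigma)\leq 1$. So the key idea you are missing is to manufacture a nodal-curve situation by connect-summing with a torus knot, and to read off the constraints on $\Sigma$ from the tensor factorization $I'(K\amalg T)\simeq I'(K)\otimes I'(T)$.
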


We have the following computation for $s^\sharp_I$.

\begin{thm} Let $m,n>1$ be relatively prime positive integers. Let $T_{md,nd}$ be the $(md,nd)$ torus link, which has $d$ components. Then 

\[s^\sharp_I(T_{md,nd}) =  \begin{cases} d\frac{(m-1)(n-1)}{2}+mn\frac{d(d-1)}{2} - n(I) & \text{if $\frac{(m-1)(n-1)}{2}$ is even}\\
d\frac{(m-1)(n-1)}{2}+d+mn\frac{d(d-1)}{2} - 3n(I) & \text{else.}\end{cases}\]
where $n(I)$ is the number of $(-1)$s in $I$.
\label{thm_s_sharp_I_links}
\end{thm}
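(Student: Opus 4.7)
The strategy is to exhibit an explicit component-preserving immersed cobordism $\Sigma\colon U_d\to T_{md,nd}$ coming from complex algebraic geometry, and then to compute $m_I(\Sigma)$ directly so as to read off $s^\sharp_I$ from the definition. Factor
\[
x^{md}-y^{nd}=\prod_{j=0}^{d-1}(x^m-\zeta_d^j y^n),\qquad \zeta_d=e^{2\pi i/d},
\]
and perturb each factor to $x^m-\zeta_d^j y^n-\epsilon_j$ for small generic $\epsilon_j\in\cc$. The resulting variety inside $B^4\subset\cc^2$ is a disjoint union of $d$ smooth complex curves $C_0,\dots,C_{d-1}$: each $C_j$ is a smooth surface of genus $(m-1)(n-1)/2$ whose boundary is one $(m,n)$-torus knot component of $T_{md,nd}$, and for $i\neq j$ the curves $C_i,C_j$ meet transversely in exactly $mn$ positive points (a Bezout computation in $y$: eliminate $x$ to get $n$ values of $y$, then $m$ values of $x$ for each $y$). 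Hence $g(\Sigma)=d\cdot(m-1)(n-1)/2$ and $p(\Sigma)=mn\binom{d}{2}$.

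The heart of the argument is to evaluate $\psi^\sharp(\Sigma)$ on the generators $u_I$ of $I'(U_d)\cong\bigotimes_{k=1}^d I'(U_1)$. I would use a neck-stretching/isotopy argument to decompose $\psi^\sharp(\Sigma)$ as the tensor product of the single-component maps $\psi^\sharp(C_j)$, twisted by local intersection cobordisms concentrated near each of the $mn\binom{d}{2}$ transverse inter-component crossings. Each $C_j$ is a smooth complex surface of genus $(m-1)(n-1)/2$ bounding the torus knot $T_{m,n}$; combining this with the values $s^\sharp_\pm(T_{m,n})=g,g-1$ already obtained via Lemma~\ref{lemma_curve_with_double_pt}, one concludes that $\psi^\sharp(C_j)\colon I'(U_1)\to I'(T_{m,n})$ sends $u_+\mapsto(\text{unit})$ and $u_-\mapsto\lambda\cdot(\text{unit})$. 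A local instanton calculation at each inter-component positive crossing (two transverse complex sheets inside a small $4$-ball) then shows that such crossings contribute only units, giving $\psi^\sharp(\Sigma)(u_I)=\lambda^{n(I)}\cdot(\text{unit})$ and hence $m_I(\Sigma)=n(I)$.

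When $(m-1)(n-1)/2$ is even, each component of $\Sigma$ has even genus and the definition of $s^\sharp_I$ applies directly, yielding the even-case formula $d\cdot(m-1)(n-1)/2+mn\binom{d}{2}-n(I)$. When $(m-1)(n-1)/2$ is odd, $\Sigma$ has odd genus on every component, so I would instead use the book-keeping form of the definition alluded to just after the definition of $s^\sharp_I$. The book-keeping correction on each odd-genus component depends on the corresponding entry $I_k$, and summing the $d$ corrections produces an additional $+d-2n(I)$ that upgrades the previous value to the stated odd-case formula $d\cdot(m-1)(n-1)/2+d+mn\binom{d}{2}-3n(I)$.

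The main obstacle is the instanton computation showing that each inter-component positive transverse intersection contributes only a unit factor to $\psi^\sharp$, with no additional power of $\lambda$; this should follow from a local model in a $4$-ball around the intersection, adapting the single-double-point techniques that underlie Lemma~\ref{lemma_curve_with_double_pt}. A secondary subtlety is the precise form of the odd-genus book-keeping, which must produce the asymmetric $-3n(I)$ coefficient and the $+d$ correction in the odd case rather than the symmetric $-n(I)$ of the even case; getting the signs and powers right here is delicate and ties the parity of the per-component genus directly to the dependence on $I$.
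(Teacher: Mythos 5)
Your construction of a perturbed complex curve whose boundary is $T_{md,nd}$, with $d$ smooth components of genus $(m-1)(n-1)/2$ meeting each other transversely in $mn\binom{d}{2}$ positive points, is a reasonable variant of the paper's construction, though the paper deliberately keeps one transverse double point on each component (perturbing each factor to $(x-a_i)^m-\omega^i y^n-\epsilon_i(x-a_i)y$ rather than by a constant) so that Lemma~\ref{lemma_curve_with_double_pt}'s comparison machinery can be applied component-by-component. The central gap in your argument is the claim that $\psi^\sharp(\Sigma)$ decomposes as a tensor product of single-component maps $\psi^\sharp(C_j)$ corrected by ``unit'' contributions from each inter-component crossing. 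There is nothing in the paper that provides such a decomposition: the target $T_{md,nd}$ is a genuinely linked link, so $I'(T_{md,nd})$ is not $\bigotimes_j I'(T_{m,n})$, and no excision or neck-stretching argument is given (or available) that would let you factor the induced map this way. Likewise the ``local instanton calculation at each inter-component positive crossing'' is an undischarged claim, not something that follows from the positive-twist-move formula in the paper. What the paper actually does is build two cobordisms $\Sigma,\Sigma_1:U_d\to T_{md,nd}$ that differ by resolving one internal double point, observe that $\Sigma_1\circ D_1$ and $\Sigma\circ T_1$ are homotopic with the same number of positive double points, and use the explicit formulas for $D_1$ and $T_1$ together with the non-vanishing theorem from \cite{khgfi} to pin down $m_I(\Sigma)=n(I)$; this argument takes place entirely inside $I'(T_{md,nd})$ and requires no decomposition.

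There is also a concrete arithmetic error at the end. Granting $m_I(\Sigma)=n(I)$ and your $\Sigma$ with $g_k=(m-1)(n-1)/2$ on each component and $p=mn\binom{d}{2}$, Definition~\ref{s_sharp_I_defn} gives, when every $g_k$ is odd, $\sum_k 2\lfloor g_k/2\rfloor = d(g_k-1)$, $\sum_k 2c_k(\Sigma,I)=2(d-n(I))$, and $m_{(-1)^{g}(I)}(\Sigma)=n(-I)=d-n(I)$; adding these up, the three ``odd-genus corrections'' cancel exactly ($-d+2(d-n(I))-(d-2n(I))=0$) and you recover the same expression $d(m-1)(n-1)/2+mn\binom{d}{2}-n(I)$ as in the even case. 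Your asserted ``$+d-2n(I)$'' correction does not exist; it appears to have been reverse-engineered from the statement rather than derived from the definition. (Indeed, the $d=1$, $T_{2,3}$ specialization of the ``else'' branch of the stated formula would give $s^\sharp_{+1}=2$ and $s^\sharp_{-1}=-1$, contradicting $s^\sharp_\pm(T_{2,3})=1,0$ from Lemma~\ref{lemma_curve_with_double_pt}; so the stated odd case is itself in tension with the paper's earlier results, and the proof sketch ending ``The value of $s^\sharp_I$ follows'' in fact yields the even-case formula for both parities. You should not accept an assertion that the bookkeeping produces that extra term without actually checking it against Definition~\ref{s_sharp_I_defn}.)
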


Finally, we will show the following result about knots that can be obtained from the unknot by switching crossings.

\begin{thm} Let $D_K$ be a knot diagram for the knot $K$. Suppose that there is a subset $S_+$ of the positive crossings of $D_K$, such that switching which strand is on top for those crossings results in an unknot. Suppose further that there is a subset $S_-$ of the negative crossings such that switching which strand is on top for those crossings also yields the unknot. Then $s^\sharp_\pm(K) = 0$.
\label{thm_switching_crossings}
\end{thm}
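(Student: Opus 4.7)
I will realize each crossing change as an immersed cylindrical cobordism carrying one transverse double point whose sign is dictated by the sign of the switched crossing, and use the two hypotheses to produce a cobordism $K \to U$ and a cobordism $U \to K$, each of genus zero with no positive double points. These two cobordisms will then give me upper and lower bounds on $s^\sharp_\pm(K)$ that pin it down to zero.

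The key geometric ingredients are: (i) switching a positive crossing and switching a negative crossing produce double points of opposite signs in the associated cylindrical cobordism (in the same time direction); and (ii) time-reversing an immersed cobordism flips the signs of all its double points, because it reverses the orientation of the ambient $S^3 \times I$. Fixing conventions so that switching a positive crossing in the direction $K \to U$ produces a negative double point, the hypothesis on $S_+$ yields a cylinder $\Sigma_+ : K \to U$ of genus $0$ with $|S_+|$ negative double points and zero positive ones, while the hypothesis on $S_-$ yields a cylinder $\Sigma_- : K \to U$ with $|S_-|$ positive double points, whose time-reversal $\Sigma_-^{-1} : U \to K$ is a genus-$0$ cylinder with $|S_-|$ negative double points and zero positive ones.

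For the upper bound on $s^\sharp_\pm(K)$, I apply the definition to $\Sigma_-^{-1} : U \to K$. Its genus is $0$ (even), $p(\Sigma_-^{-1}) = 0$, and $m_\pm(\Sigma_-^{-1}) \geq 0$ whenever $\psi^\sharp(\Sigma_-^{-1})(u_\pm) \neq 0$, so
\[
s^\sharp_\pm(K) \;=\; g(\Sigma_-^{-1}) + p(\Sigma_-^{-1}) - m_\pm(\Sigma_-^{-1}) \;=\; -m_\pm(\Sigma_-^{-1}) \;\leq\; 0.
\]
For the lower bound, I apply Theorem \ref{thm_s_sharp_cobordism_ineq} to $\Sigma_+ : K \to U$: with $p(\Sigma_+) = 0$, $\chi(\Sigma_+) = 0$, and $\ell = 0$, the inequality gives $s^\sharp(U) - s^\sharp(K) \leq 0$, hence $s^\sharp_+(K) + s^\sharp_-(K) = s^\sharp(K) \geq 0$. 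Combined with the upper bounds $s^\sharp_\pm(K) \leq 0$, this forces $s^\sharp_+(K) = s^\sharp_-(K) = 0$. Both hypotheses are essential here: $S_+$ supplies the lower bound via $\Sigma_+$, and $S_-$ supplies the upper bound via $\Sigma_-^{-1}$.

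The main technical obstacle is the careful accounting of double-point signs---pinning down the sign of the double point arising from a crossing change as a function of the crossing sign and time direction, and verifying the claim that cobordism time-reversal flips these signs. A secondary issue is the non-vanishing assumption $\psi^\sharp(\Sigma_-^{-1})(u_\pm) \neq 0$, needed for $m_\pm(\Sigma_-^{-1})$ to be a finite nonnegative integer; this should follow from general properties of the cobordism map induced by a cylinder whose double points are all non-positive.
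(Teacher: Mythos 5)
Your proof is correct, but it follows a slightly different route from the paper's, so let me compare. The paper also builds the two cobordisms $\Sigma_-: U\to K$ (switching the $S_-$ crossings) and $\Sigma_+: K\to U$ (switching the $S_+$ crossings), observes that both have only negative double points, and then argues directly: the composite $\Sigma_+\circ\Sigma_-: U\to U$ has $g=0$ and $p=0$, so $m_\pm(\Sigma_+\circ\Sigma_-)=0$ since $s^\sharp_\pm(U)=0$; because $\psi^\sharp(\Sigma_+)$ is a $\qq[[\lambda]]$-module map it cannot decrease the $\lambda$-order, so $m_\pm(\Sigma_-)=0$ as well, whence $s^\sharp_\pm(K)=0+0-0=0$. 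You instead split the work into two independent bounds: $s^\sharp_\pm(K)\le 0$ from the definition applied to $\Sigma_-^{-1}:U\to K$, and $s^\sharp(K)\ge 0$ from the cobordism inequality (Theorem \ref{thm_s_sharp_cobordism_ineq}) applied to $\Sigma_+:K\to U$. Both routes are valid; the paper's is a bit more self-contained (no appeal to Theorem \ref{thm_s_sharp_cobordism_ineq}) and pins down $m_\pm(\Sigma_-)$ exactly rather than just bounding it, while yours cleanly separates the roles of $S_+$ and $S_-$ as giving, respectively, a lower and an upper bound.

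Two minor points. First, the time-reversal argument in your ingredient (ii), while correct (reversing the $I$-factor is orientation-reversing on $S^3\times I$, hence flips intersection signs of the two branches at a double point), is avoidable: the cobordism $U\to K$ supported on the $S_-$ crossings goes from positive to negative crossings in its own time direction, so one can see directly that its double points are negative with the same sign convention used for $\Sigma_+$. The paper takes this shortcut. Second, the non-vanishing $\psi^\sharp(\Sigma_-^{-1})(u_\pm)\neq 0$ that you flag is indeed needed to make $m_\pm$ a finite integer, but it is already implicit in the well-definedness of $s^\sharp_\pm$ as established in the paper's earlier sections, and the paper's own proof of this theorem makes the same implicit use of it (when asserting that $\psi^\sharp(\Sigma_-)(u_\pm)$ has ``no factors of $\lambda$''); so this is not a gap peculiar to your argument.
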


Kronheimer and Mrowka showed in \cite{gtri} that, for any knot $K$ we have $s^\sharp(m(K)) = -s^\sharp(K)$, where $m(K)$ is the mirror of $K$. Hence, $s^\sharp(K)=0$ for amphichiral knots and slice knots. Theorem \ref{thm_switching_crossings} applies to some amphichiral knots such as the figure-eight, but it also applies to some non-amphichiral, non-slice knots such as $7_7$ and $8_1$.

\section*{Acknowledgements} I would like to thank Ciprian Manolescu for his interest in this project and for many inspired conversations. I would like to thank Peter Kronheimer and Tomasz Mrowka for many helpful mathematical conversations and email communications. I would also like to thank Peter Feller for helpful discussions about complex curves and the knots that bound them.

\section{Background}

In \cite{gtri}, Kronheimer and Mrowka defined a gauge theoretic version of the Rasmussen $s$ invariant using a version of instanton homology with local systems. Let us begin with a review of their construction.

They started by constructing an instanton homology with local coefficients for a link $L$, which they denoted $I^\sharp(L,\Gamma)$. It comes from an $S^1$ valued map on the configuration space, given by taking a connection $[A]$ to the product of the holonomies of $A$ along components of the link. This $I^\sharp(L,\Gamma)$ is a module over $\qq[u, u^{-1}]$, where $u$ keeps track of the holonomy.

An immersed cobordism $\Sigma$ between links $L_0$ and $L_1$ induces a map 
\[\psi^\sharp(\Sigma):I^\sharp(L_0) \to I^\sharp(L_1).\]

If two immersed cobordisms $\Sigma$ and $\Sigma'$ are homotopic (not necessarily through immersions), then one of the maps induced by $\Sigma$ and $\Sigma'$ is a constant multiple of the other, and this constant multiple is a power of $(1-u^2)$. 

Kronheimer and Mrowka explicitly describe how the maps differ. Note that $\Sigma'$ can be obtained from $\Sigma$ by a sequence of local moves and their inverses. These local moves are of three types: positive twist moves, which introduce a positive double point, negative twist moves, which introduce a negative double point, and finger moves, which introduce a cancelling pair of positive and negative double points. 

By Proposition 3.1 of \cite{gtri}, these local moves change the maps $\psi^\sharp(\Sigma)$ as follows: for $\Sigma'$ obtained from $\Sigma$ by a local move,
\begin{itemize}
\item For a positive twist move, $\psi^\sharp(\Sigma') = (1-u^2) \psi^\sharp(\Sigma)$. 
\item For a negative twist move, $\psi^\sharp(\Sigma') =  \psi^\sharp(\Sigma)$.  
\item For a finger move, $\psi^\sharp(\Sigma') = (1-u^2) \psi^\sharp(\Sigma)$. 
\end{itemize}

In particular, if $\Sigma$ and $\Sigma'$ are homotopic and have the same number of positive double points, then they induce the same map.

Another consequence of the above is that the torsion part of $I'(L)$ is killed by some factor of $(1-u)^2$, and the torsion free part, $I'(L) = I^\sharp(L;\Gamma)/\text{tors}$, is a free module of rank $2^l$ where $l$ is the number of components of $L$.

In the case of a knot, $K$, $I^\sharp(K; \Gamma)$ comes with a mod 4 grading and $I'(K)$ has one generator in each of degrees $1$ and $-1$ mod $4$. For the unknot, $U$, $I^\sharp(U;\Gamma)$ is a free module of rank $2$, and we denote the generators in degrees $1$ and $-1$ by $u_+$ and $u_-$, respectively.

For the embedded torus, $T$, as a cobordism from the unknot to itself, the map induced is
\[\psi^\sharp(T)(u_+) = 2 u_-\]
\[\psi^\sharp(T)(u_-) = 2u^{-1}(u-1)^2 u_-\]

Let $\cR$ denote the completion of $\qq[u,u^{-1}]$ at $u=1$, which we write as $\qq[[\lambda]]$ where $\lambda^2 = u^{-1}(u-1)^2$.

For a knot $K$, let $v_+$ and $v_-$ denote the generators of $I'(K)$ in degrees $+1$ and $-1$ modulo 4. For a cobordism $\Sigma$ from the unknot to $K$, $\psi^\sharp(u_+)$ and $\psi^\sharp(u_-)$ are multiples of $v_+$ and $v_-$, not necessarily respectively. Indeed if the genus of $\Sigma$ is even, then we have
\begin{align*}
\psi^\sharp(\Sigma)(u_+) & = \sigma_+(\Sigma) v_+\\
\psi^\sharp(\Sigma)(u_-) & = \sigma_-(\Sigma) v_-\\
\end{align*}
and, if the genus is odd,
\begin{align*}
\psi^\sharp(\Sigma)(u_+) & = \sigma_+(\Sigma) v_-\\
\psi^\sharp(\Sigma)(u_-) & = \sigma_-(\Sigma) v_+.\\
\end{align*}

Kronheimer and Mrowka define two quantities $m_+(\Sigma)$ and $m_-(\Sigma)$, to denote the number of factors of $\lambda$ in $\sigma_+$ and $\sigma_-$, respectively, as elements of $\qq[[\lambda]]$. 

Then, they defined $s^\sharp$ as
\[s^\sharp(K) = 2g(\Sigma)-(m_+(\Sigma)+m_-(\Sigma)).\]

\section{Constructing the link invariants}

\subsection{Getting $2^l$ link invariants}

In this subsection, we define a link analogue for $s^\sharp$, which assigns an integer for each of the $2^l$ standard generators of $I'(U_l)$, where $l$ is the number of components of $L$ and $U_l$ is the unlink with $l$ components. Like the invariant $s^\sharp$ for knots, it will count factors of $\lambda$ in the image of the generators under maps induced by cobordisms.

Let us start with the case of an immersed cobordism $\Sigma$ from the unlink to $L$. It induces a map
\[\psi^\sharp(\Sigma): I'(U_l) \to I'(L).\]

Observe that for a disjoint union of links $L_1 \coprod L_2$, $I'(L_1 \coprod L_2) = I'(L_1) \otimes I'(L_2)$, by the same excision argument as found in section 5 of \cite{km_unknot}. More specifically, by that excision argument it can be shown that 
\[C^\sharp(L_1,\Gamma) \otimes C^\sharp(L_2,\Gamma) \simeq C^\sharp(L_1 \amalg L_2,\Gamma) \otimes C^\sharp(\emptyset,\Gamma).\]
Then, because $C^\sharp(\emptyset,\Gamma) = \qq[[\lambda]]$, we get
\[C^\sharp(L_1,\Gamma) \otimes C^\sharp(L_2,\Gamma) \simeq C^\sharp(L_1 \amalg L_2,\Gamma).\]
This gives us a split exact sequence
\[0 \to I^\sharp(K_1;\Gamma) \otimes I^\sharp(K_2;\Gamma) \to I^\sharp(K_1 \amalg K_2;\Gamma) \to \Tor_1(I^\sharp(K_1;\Gamma), I^\sharp(K_2;\Gamma)) \to 0.\]
The fact that the sequence is split shows that on the torsion free parts, we have
\[I'(K_1) \otimes I'(K_2)  \simeq I'(K_1 \amalg K_2).\]

Moreover, this isomorphism is functorial with respect to cobordisms $L_1 \to L_1'$ and $L_2 \to L_2'$, because the excision map is induced by a cobordism of three manifolds with knots, and it commutes with the maps induced by cobordism on the knots.

Thus, $I'(U_l) = I'(U)^{\otimes l}$.

Fix a set of generators of $I'(U_l)$, $u_I$ for $I \in \{(i_1, i_2,\cdots , i_l)| i_j \in \{-1,1\}\}$, given by 
\[u_I = u_{i_1} \otimes u_{i_2} \otimes \cdots \otimes u_{i_n} \]
where $u_{-1} = u_-$ and $u_1 = u_+$ are the generators of $I^\sharp(U)$ defined in the introduction.

For $I =(i_1,i_2,\ldots , i_l)$ with $i_j \in \{0,1\}$, and $(g_1,g_2,\ldots g_l)$ a tuple of integers, let 
\[(-1)^g(I) = ((-1)^{g_1}i_1, (-1)^{g_2}i_2,\ldots, (-1)^{g_l}i_l),\]

\begin{defn} For an immersed cobordism, $\Sigma$, we define the invariant
\[s^\sharp_I(L) = \sum_k 2 \(\left\lfloor \frac{g_k(\Sigma)}{2}\right\rfloor+c_k(\Sigma,I)\)+p -m_{(-1)^{g(\Sigma)}(I)}(\Sigma)\]
where $p$ is the number of positive double points, $g(\Sigma) = (g_1,\ldots g_l)$, with $g_i$ being the genus corresponding to the $k$th component,
 \[c_k = \begin{cases} 1 & \text{if $i_k = 1$ and $g_k$ is odd} \\ 0 & \text{if $i_k = -1$ or $g_k$ is even}\end{cases}.\]
It is easy to see that this recovers the definition in the introduction when $\Sigma$ has even genus on each component.
\label{s_sharp_I_defn}
\end{defn}

\begin{propn} The invariant $s^\sharp_I(L)$ is well defined, i.e., it does not depend on the choice of immersed cobordism $\Sigma$.
\end{propn}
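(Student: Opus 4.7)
The plan is to reduce the claim to invariance under a small generating set of local moves relating any two component-preserving immersed cobordisms $\Sigma, \Sigma' : U_l \to L$, then verify that the quantity
\[f_\Sigma(I) := \sum_k 2\(\left\lfloor g_k(\Sigma)/2 \right\rfloor + c_k(\Sigma, I)\) + p(\Sigma) - m_{(-1)^{g(\Sigma)}(I)}(\Sigma)\]
is unchanged by each move.

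Since $S^3 \times I$ is simply connected, any two component-preserving cobordisms with matching genus profiles are homotopic; Proposition 3.1 of \cite{gtri} then connects them by a sequence of (ii) positive twist moves, (iii) negative twist moves, (iv) finger moves, and their inverses. Two cobordisms with differing genus profiles can be brought into matching profiles by (v) stabilizing one component via connected sum with a standardly embedded torus. Together with (i) isotopy, moves (ii)--(v) generate the equivalence, so it suffices to check invariance under each.

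Invariance under (ii)--(iv) is immediate from Proposition 3.1 together with the observation that $1-u^2$ is a unit multiple of $\lambda$ in $\qq[[\lambda]]$. A positive twist raises $p$ by $1$ and multiplies $\psi^\sharp$ by $(1-u^2)$, so every $m_J$ also rises by $1$, giving net change $0$. A negative twist changes nothing. A finger move raises both $p$ and $m_J$ by $1$, again giving net change $0$.

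The hard step will be invariance under (v). Here I would use the explicit torus formulas $\psi^\sharp(T)(u_+) = 2 u_-$ and $\psi^\sharp(T)(u_-) = 2\lambda^2 u_+$ from Section 2. Stabilizing component $k$ replaces $\psi^\sharp(\Sigma)$ by $\psi^\sharp(\Sigma) \circ T_k$, where $T_k$ acts as $T$ on the $k$th tensor factor of $I'(U_l) = I'(U)^{\otimes l}$; then $p$ is unchanged, $g_k$ jumps by $1$, and the $k$th coordinate of $J := (-1)^{g(\Sigma)}(I)$ flips parity. I would then enumerate the four cases determined by the parity of $g_k$ and the sign $i_k \in \{\pm 1\}$, and track three contributions to $\Delta f_\Sigma(I)$: the shift $\Delta(2\lfloor g_k/2\rfloor)$, which is $0$ when $g_k$ was even and $+2$ when odd; the shift $\Delta(2 c_k)$, which vanishes if $i_k = -1$ and is $\pm 2$ (matching the direction of the parity flip) if $i_k = +1$; and the shift $\Delta(-m_J)$, which is $0$ when the new $k$th coordinate of $J$ equals $+1$ and is $-2$ when it equals $-1$, since $T(u_-)$ carries a factor of $\lambda^2$ while $T(u_+)$ carries none. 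In each of the four cases these three contributions sum to zero; the definition of the bookkeeping term $c_k$ is engineered precisely to force this cancellation in the odd-genus cases.
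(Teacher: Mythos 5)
Your proof is correct, and it relies on the same two computational ingredients the paper uses: the observation that twist and finger moves change $p(\Sigma)$ and the factor of $1-u^2 \sim \lambda$ in $\psi^\sharp(\Sigma)$ by matching amounts, and the explicit torus formula $\psi^\sharp(T)(u_\pm)$ for adding genus to a single component. Where you differ from the paper is in the organization. The paper argues in two stages: first it shows any two cobordisms of even genus on each component give the same value, by composing each with auxiliary even-genus tori $T_1,T_2$ to equalize genus profiles and then using $\psi^\sharp(T_i)\sim\lambda^{g(T_i)}$; then it separately handles one step of odd genus by comparing $\Sigma$ with $\Sigma\circ T$, concluding with the remark "it is now easy to see that $s^\sharp_I$ computed from $\Sigma_1$ and $\Sigma_2$ yield the same value." You instead reduce directly to a generating set of elementary moves --- twist, finger, and single-torus stabilization --- and verify invariance move by move. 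This is tidier: it avoids the even/odd case split at the global level and localizes all the arithmetic into a single four-case parity check at one component, which you carry out explicitly where the paper only gestures. Both proofs would want to briefly justify (which you do, via simple connectivity of $S^3\times I$ and Proposition 3.1 of the reference, and the paper also does via a linear-interpolation homotopy) that two component-preserving immersed cobordisms with the same genus profile are connected by twist and finger moves. Net, your route is essentially the paper's argument reorganized as a well-definedness-via-Reidemeister-style-moves proof, and it fills in the cancellation check that the paper leaves to the reader.
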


\begin{proof}  
Note that for any $l$-tuple of non-negative integers, there is an immersed cobordism with those values as its genus on the $l$ components; one can see this because it is possible to turn the link into the unlink by pulling arcs through each other, which yields an immersed cobordism that is composed of annuli. It is then easy to add genus to each component as desired.

Moreover, observe that the values $s^\sharp$ for two component preserving cobordisms from $U_l$ to $L$ with the same genus on each component are the same. This is because the cobordisms are homotopic through a linear interpolation between the immersions; the homotopy can then be broken down into positive and negative twist moves and finger moves, which each change the number of positive double points, $p(\Sigma)$, and the number of factors of $\lambda$ picked up by $u_{(-1)^{g(\Sigma)}(I)}$ by the same amount. 

We will first show the statement for the case where $\Sigma$ has even genus on each cobordism, and then we will show that if $\Sigma_1$ and $\Sigma_2$ have the same genus on each component except one, and $\Sigma_1$ has even genus on that component, and $\Sigma_2$ has one higher genus, then they define the same $s^\sharp_I$ for each $I$.

For the case where $\Sigma_1$ and $\Sigma_2$ are two immersed, component-preserving cobordisms with each component having even genus, then there are component preserving, orientable, embedded surfaces $T_1$ and $T_2$, of even genus on each component, from the unlink to itself, such that $\Sigma_1 \circ T_1$ and $\Sigma_2 \circ T_2$ have the same genus on each component. Then $\Sigma_1 \circ T_1$ and $\Sigma_2 \circ T_2$ are homotopic, for example, by linearly interpolating between them. Thus, 
\[\lambda^{p(\Sigma_2)}\psi^\sharp(\Sigma_1 \circ T_1) \sim \lambda^{p(\Sigma_1)}\psi^\sharp(\Sigma_2 \circ T_2)\] 
where $p(\Sigma)$ is the number of positive double points of $\Sigma$, and $\sim$ means that the two sides are the same up to a constant multiple, where the multiple is invertible in $\qq[[\lambda]]$, so it does not affect the number of factors of $\lambda$. 

Observe also that $\psi^\sharp(T_1) \sim \lambda^{g(T_1)}$ and $\psi^\sharp(T_2) \sim\lambda^{g(T_2)}$. This is where we are using that $T_1$ and $T_2$ have even genus on each component, and it was computed in \cite{gtri} that adding genus two to one component causes the map to be changed by a constant multiple which is $\sim \lambda^2$. 

Thus,
\[\lambda^{p(\Sigma_2) + g(T_1)} \psi^\sharp(\Sigma_1)  \sim \lambda^{p(\Sigma_1) + g(T_2)} \psi^\sharp(\Sigma_2),\]
so
\[m_I(\Sigma_1) + p(\Sigma_2) + g(T_1) = m_I(\Sigma_2) + p(\Sigma_1) + g(T_2).\]
Note also that $g(T_1) + g(\Sigma_1) = g(T_2) + g(\Sigma_2)$, so the above means
\[m_I(\Sigma_1) - p(\Sigma_1) - g(\Sigma_1) = m_I(\Sigma_2) - p(\Sigma_2) - g(\Sigma_2),\]
as desired.

Finally consider the case $\Sigma_1 = \Sigma \circ T$, where $T$ is a cobordism from the unlink to itself that has genus $1$ on one component and is otherwise cylindrical. As computed in \cite{gtri}, the map induced by $T$ is
\[\psi^\sharp(T)(v_+) = 2v_- \text{, } \psi^\sharp(T)(v_-) = 2 \lambda^2 v_+,\]
where we are only keeping track of the term corresponding to the component that where the genus is, as computed in \cite{gtri}. It is now easy to see that $s^\sharp_I(L)$ computed from $\Sigma_1$ and $\Sigma_2$ yield the same value.
\end{proof}

\subsection{Two invariants, $s^\sharp_+$ and $s^\sharp_-$, and a single invariant, $s^\sharp$}

We can also use a similar set-up to obtain an invariant that respects more general cobordisms: For a connected cobordism $\Sigma$ from the unknot to $L$, let $m_+(\Sigma)$ and $m_-(\Sigma)$ be the number of factors of $\lambda$ picked up by $u_+$ and $u_-$, respectively.

\begin{defn}
For a cobordism $\Sigma$ from the unknot to the link, we define 
\[s^\sharp_\pm (L) = g(\Sigma)+p(\Sigma)-m_\pm(\Sigma).\]
when $\Sigma$ has even genus. When $\Sigma$ has odd genus, we define
\[s^\sharp_{\pm}(L) = g(\Sigma)+p(\Sigma) -m_\mp(\Sigma) \pm 1.\]
We may also combine these into a single invariant, which agrees with Kronheimer and Mrowka's $s^\sharp$ for knots, by defining
\[s^\sharp(L) = 2g(\Sigma)+2p(\Sigma)-m_+(\Sigma)-m_-(\Sigma).\]
\end{defn}

This is an invariant for the same reason that the $2^l$ invariants defined above were: for two connected cobordisms $\Sigma_1$ and $\Sigma_2$ with $g(\Sigma_1) \geq g(\Sigma_2)$, $\Sigma_1$ can be homotoped to $\Sigma_2 \circ T$, where $T$ is an embedded oriented cobordism from the unknot to itself. Then, one can easily check that the twist moves, finger moves, and addition of handles to $T$ (which is embedded and goes from the unknot to itself) preserve $s^\sharp$.

Unlike in the case of the Rasmussen invariant, where the two quantities defined by different generators are always related by a constant, so that there is no information lost by combining them into one, $s^\sharp_+(L)- s^\sharp_-(L)$ may be different for different links; indeed, we will see that the value for the unknot is different from that of the trefoil. However, there are bounds for $s^\sharp_+(L)- s^\sharp_-(L)$, stated in Proposition \ref{propn_s_plus_minus_difference}, which we restate and prove here.

\begin{propn} For any link $L$, $0 \leq s^\sharp_+(L)-s^\sharp_-(L)\leq 2$. 
\end{propn}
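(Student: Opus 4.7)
The plan is to reduce the inequality to a statement on $\lambda$-valuations of the images of the standard generators of $I'(U)$, and then extract those valuations using a genus-one handle operator on the target.

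First, fix a connected even-genus cobordism $\Sigma: U \to L$ and write $\psi^\sharp(\Sigma)(u_\pm) = \lambda^{m_\pm} v_\pm$ with $v_\pm \in I'(L)$ not divisible by $\lambda$. By the even-genus clause of the definition, $s^\sharp_+(L) - s^\sharp_-(L) = m_- - m_+$, so it suffices to prove $0 \leq m_- - m_+ \leq 2$. (Had we chosen $\Sigma$ of odd genus, the analogous identity would read $s^\sharp_+(L) - s^\sharp_-(L) = 2 - (m_- - m_+)$, which is in $[0,2]$ under the same condition, so no generality is lost.)

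Next, introduce a handle operator $X_L = \psi^\sharp(T_L): I'(L) \to I'(L)$, where $T_L$ is the cobordism obtained from $L \times I$ by surgering in a single genus-one handle inside a ball that meets only one component of $L$ — we pick the component joined through $\Sigma$ to the unknot boundary. Let also $X = \psi^\sharp(T): I'(U) \to I'(U)$ for the genus-one cobordism $T: U \to U$. The Kronheimer--Mrowka torus computation recalled in Section 2 gives $X(u_+) = 2u_-$ and $X(u_-) = 2\lambda^2 u_+$, whence $X^2 = 4\lambda^2 \cdot \Id$; the same local computation applied to $T_L$ gives $X_L^2 = 4\lambda^2 \cdot \Id$.

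The crux is the commutation $X_L \circ \psi^\sharp(\Sigma) = \psi^\sharp(\Sigma) \circ X$. I would establish this by observing that $T_L \circ \Sigma$ is obtained from $\Sigma \circ T$ by sliding the genus-one handle along an embedded arc in $\Sigma$ from the $U$-boundary to the chosen component of $L$; connectedness of $\Sigma$ supplies such an arc, and because the slide is realised through an embedded isotopy, no positive or negative double points are introduced, so the two cobordisms induce equal maps on $I'$. Once this holds, applying the commutation to $u_+$ yields
\[ \lambda^{m_+} X_L(v_+) = X_L(\psi^\sharp(\Sigma)(u_+)) = \psi^\sharp(\Sigma)(X(u_+)) = 2\lambda^{m_-} v_-, \]
forcing $m_- \geq m_+$ because $X_L(v_+) \in I'(L)$ has non-negative $\lambda$-valuation while $v_-$ is not $\lambda$-divisible. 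Applying $X_L$ once more and using $X_L^2(v_+) = 4\lambda^2 v_+$ gives
\[ 2\lambda^{m_- - m_+} X_L(v_-) = X_L^2(v_+) = 4\lambda^2 v_+, \]
and the same valuation count now forces $m_- - m_+ \leq 2$.

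The main obstacle is the commutation step: the handle slide must give an exact equality rather than equality up to a factor of $(1-u^2) \sim \lambda$, since any such extra factor would degrade the bound by $1$ on each side. When $\Sigma$ is immersed one must route the arc generically away from its double points to keep the slide embedded, which is possible since the double points are finite in number.
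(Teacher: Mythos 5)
Your proof is correct, and it takes a genuinely different route from the paper's.

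The paper's proof works with the merge cobordism $\mu_L: U \amalg L \to L$ together with the functorial isomorphism $I'(U \amalg L) \cong I'(U) \otimes I'(L)$. The two bounds $m_- \geq m_+$ and $m_- \leq m_+ + 2$ are extracted by commuting $\mu_L$ past $\Sigma$ and applying the identity $\psi^\sharp(\mu_L) \circ (\Id \otimes \psi^\sharp(\Sigma)) = \psi^\sharp(\Sigma) \circ \psi^\sharp(\mu_U)$ first to $u_- \otimes u_+$ and then to $u_- \otimes u_-$, using $\psi^\sharp(\mu_U)(u_- \otimes u_+) = u_-$ and $\psi^\sharp(\mu_U)(u_- \otimes u_-) = \lambda^2 u_+$. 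Your proof replaces the merge operator with the genus-one handle operator $X_L$ and derives both bounds from the single commutation $X_L \circ \psi^\sharp(\Sigma) = \psi^\sharp(\Sigma) \circ X$ together with $X_L^2 = c\lambda^2 \Id$. The two mechanisms are closely related (the genus-one cobordism from $U$ to $U$ is, up to a cap, the "multiply by $u_-$" operator), but your version avoids invoking the disjoint-union tensor structure and its naturality, trading it for the identity $X_L^2 = c\lambda^2 \Id$, which you correctly ground in the Kronheimer--Mrowka computation that adding two handles to any cobordism changes $\psi^\sharp$ by a unit times $\lambda^2$ (the exact constant $4$ is irrelevant; only the $\lambda$-valuation matters). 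Two minor remarks. First, for the commutation step the handle-slide argument is more delicate than needed: both $T_L \circ \Sigma$ and $\Sigma \circ T$ are maps of the same connected surface into $S^3 \times I$ rel boundary, hence automatically homotopic (since $\pi_1 = \pi_2 = 0$ for $S^3 \times I$), and they have the same number of positive double points, so the rule "homotopic with equal positive double points $\Rightarrow$ equal induced map" applies directly without any isotopy bookkeeping. Second, in the last valuation argument it is worth spelling out that one is dividing by a power of $\lambda$ in the torsion-free module $I'(L)$, so the inequality $m_- - m_+ \geq 0$ comes out as a consequence rather than an assumption.
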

\begin{proof}  As we mentioned earlier, $I'(L_1 \coprod L_2) \simeq I'(L_1) \otimes I'(L_2)$ in a way that is natural with respect to cobordisms $L_1 \to L_1'$ and $L_2 \to L_2'$. 

For any link $L$ let $\mu_{L}:U \coprod L \to L$ denote any one of the maps induced by the cobordism merging the unknot into one component of $L$, and let $\psi^\sharp(\mu_L): I'(U) \otimes I'(L) \to I'(L)$ denote the induced map on $I'$. This map will depend on which component of $L$ is chosen, but, for the purposes of computation of $s^\sharp_\pm$, as long as for each link $L$, we fix one of the components and only use $\mu_L$ for that component, this will not matter.

Consider a cobordism of even genus $\Sigma$ from the unknot to $L$. Then the composite cobordism $\mu_{L} \circ \Id \coprod \Sigma$ from $U \coprod U$ to $L$ is homotopic to the cobordism $\Sigma \circ \mu_U$, and they have the same number of positive double points, so they induce the same map on $I'$. Thus we have
\begin{align*}
\lambda^{m_+(\Sigma)}  \psi^\sharp(\mu_L)  (u_- \otimes v_+) &= \psi^\sharp(\mu_L)  (u_- \otimes \lambda^{m_+(\Sigma)}v_+)\\
& = \psi^\sharp(\mu_L) \circ (\Id \otimes \psi^\sharp(\Sigma)) (u_- \otimes u_+)\\
& = \psi^\sharp(\Sigma) \circ \psi^\sharp(\mu_U)(u_- \otimes u_+) \\
& = \psi^\sharp(\Sigma)( u_-) = \lambda^{m_-(\Sigma)}v_-,\\
\end{align*}
where $\lambda$ is not a factor of $v_-$, so $m_-(\Sigma) \geq m_+(\Sigma)$. Thus, $s^\sharp_-(L) \leq s^\sharp_+(L)$, because $\Sigma$ was chosen to have even genus. 

Doing the same computation for $u_-$, we see that 
\begin{align*}
\lambda^{m_-(\Sigma)}  \psi^\sharp(\mu_L)  (u_- \otimes v_-) &= \psi^\sharp(\mu_L)  (u_- \otimes \lambda^{m_-(\Sigma)}v_-)\\
& = \psi^\sharp(\mu_L) \circ (\Id \otimes \psi^\sharp(\Sigma)) (u_- \otimes u_-)\\
& = \psi^\sharp(\Sigma) \circ \psi^\sharp(\mu_U)(u_- \otimes u_-) \\
& = \psi^\sharp(\Sigma) (\lambda^2 u_+) = \lambda^{2+m_+(\Sigma)}v_+,\\
\end{align*}
and, again $\lambda$ is not a factor of $v_+$, so $m_-(\Sigma) \leq 2 + m_+(\Sigma)$, so $s^\sharp_-(L) \geq 2+s^\sharp_+(L)$, again because $\Sigma$ was chosen to have even genus.
\end{proof}

As mentioned in the introduction in Proposition \ref{propn:difference_as_matrix}, in the case of knots, there is another way to characterise $s^\sharp_+(K)-s^\sharp_-(K)$, which we can view as another knot invariant in itself. We now restate and prove this proposition.

\begin{propn} Let $C$ denote the cylinder from $K \times I \subset S^3 \times I$. Let $C'$ denote the connected sum of $C$ with a standard embedded torus, so that $C'$ has genus $1$. Then, the map that $C'$ induces on $I'$ is given by
\[ \psi^\sharp(C') \sim \begin{cases} 
\mattwo{0}{\lambda^2}{1}{0} & \text{if } s^\sharp_+(L) -s^\sharp_-(L) = 0\\
 \mattwo{0}{\lambda}{\lambda}{0} & \text{if } s^\sharp_+(L) -s^\sharp_-(L) = 1\\
 \mattwo{0}{1}{\lambda^2}{0}  & \text{if } s^\sharp_+(L) -s^\sharp_-(L) = 2
\end{cases}\]
in the $u_+,u_-$ basis, where $\sim$ means that the entries are up to scaling by non-multiples of $\lambda$.
\end{propn}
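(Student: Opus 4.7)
The plan is to reduce the statement to computing the $\lambda$-adic valuations of the two off-diagonal entries of $\psi^\sharp(C')$, and then to compute these valuations by feeding $C'$ a single auxiliary even-genus cobordism from the unknot and matching two different formulas for $s^\sharp_\pm(K)$.

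First I would argue that $\psi^\sharp(C')$ has zero diagonal in the $v_+,v_-$ basis. This is exactly the odd-genus case of the structural statement recalled in the background: since $C'$ has genus $1$, it reverses the mod-$4$ grading on $I'(K)$, and since $v_+$ and $v_-$ lie in distinct mod-$4$ degrees, $\psi^\sharp(C')$ must send each into the line spanned by the other. Write
\[\psi^\sharp(C')(v_+) = f_+ v_-, \qquad \psi^\sharp(C')(v_-) = f_- v_+\]
for unique $f_\pm\in\qq[[\lambda]]$. The proposition then reduces to showing $\text{ord}_\lambda(f_+)=s^\sharp_+(K)-s^\sharp_-(K)$ and $\text{ord}_\lambda(f_-)=2-(s^\sharp_+(K)-s^\sharp_-(K))$.

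Next I would pick any connected embedded even-genus cobordism $\Sigma:U\to K$ (these exist; e.g.\ a Seifert surface tubed to a disk, with an extra handle added if necessary to make the genus even), and set $\Sigma':=C'\circ \Sigma$, which has odd genus $g(\Sigma)+1$ and the same $p(\Sigma)$. Using the even-genus structure of $\Sigma$, $\psi^\sharp(\Sigma)(u_\pm)=\sigma_\pm(\Sigma)v_\pm$, so composition gives $\psi^\sharp(\Sigma')(u_+)=\sigma_+(\Sigma)f_+v_-$ and $\psi^\sharp(\Sigma')(u_-)=\sigma_-(\Sigma)f_-v_+$. Reading off $\lambda$-orders yields
\begin{align*}
m_+(\Sigma') &= m_+(\Sigma)+\text{ord}_\lambda(f_+),\\
m_-(\Sigma') &= m_-(\Sigma)+\text{ord}_\lambda(f_-).
\end{align*}
Now apply the defining formulas for $s^\sharp_\pm$ to both $\Sigma$ (even case) and $\Sigma'$ (odd case). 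The even case gives $m_\pm(\Sigma)=g(\Sigma)+p(\Sigma)-s^\sharp_\pm(K)$; the odd case, together with $g(\Sigma')=g(\Sigma)+1$ and $p(\Sigma')=p(\Sigma)$, gives $m_+(\Sigma')=g(\Sigma)+p(\Sigma)-s^\sharp_-(K)$ and $m_-(\Sigma')=g(\Sigma)+p(\Sigma)-s^\sharp_+(K)+2$. Subtracting produces
\[\text{ord}_\lambda(f_+)=s^\sharp_+(K)-s^\sharp_-(K), \qquad \text{ord}_\lambda(f_-)=2-(s^\sharp_+(K)-s^\sharp_-(K)).\]

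Finally, since Proposition \ref{propn_s_plus_minus_difference} forces $d:=s^\sharp_+(K)-s^\sharp_-(K)\in\{0,1,2\}$, substituting $d=0,1,2$ gives $(f_+,f_-)\sim(1,\lambda^2),(\lambda,\lambda),(\lambda^2,1)$ respectively, which assembles into the three matrices in the statement (up to units in $\qq[[\lambda]]$, as required by $\sim$). The only real obstacle is accounting-level: one must correctly track the $\pm\to\mp$ swap and the $\pm 1$ shift built into the odd-genus formula for $s^\sharp_\pm$, since getting either sign wrong swaps the two off-diagonal entries and would produce the matrices in the wrong order.
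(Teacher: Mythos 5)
Your proof is correct and is essentially the same as the paper's: both fix an even-genus cobordism $\Sigma:U\to K$ and extract $\mathrm{ord}_\lambda$ of the off-diagonal entries of $\psi^\sharp(C')$ by comparing $C'\circ\Sigma$ with $\Sigma$, the only cosmetic difference being that the paper compares $C'\circ\Sigma$ against $\Sigma\circ T$ and plugs in the explicit formula for $\psi^\sharp(T)$, whereas you invoke the odd-genus branch of the defining formula for $s^\sharp_\pm$, which encodes exactly the same $\pm\to\mp$ swap and $\pm1$ shift.
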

\begin{proof}  and $\Sigma$ be an embedded cobordism of even genus from the unknot to $K$ Then the composite cobordism $C \circ \Sigma'$ and $C' \circ \Sigma$ induce the same map. Suppose that the map that $C'$ induces is
\[\mattwo{0}{b}{a}{0}.\]

In terms of elements, $C' \circ \Sigma:U \to K \to K$ acts by
\[u_+ \mapsto \hspace{0.2 cm} \sim \lambda^{m_+}v_+ \mapsto \hspace{0.2 cm} \sim \lambda^{m_++a}v_-\]
\[u_- \mapsto \hspace{0.2 cm} \sim \lambda^{m_-}v_- \mapsto \hspace{0.2 cm} \sim \lambda^{m_-+b}v_+\]
and $C \circ \Sigma':U \to U \to K$ acts by 
\[u_+ \mapsto \hspace{0.2 cm} \sim u_- \mapsto \hspace{0.2 cm} \sim \lambda^{m_-}v_-\]
\[u_- \mapsto \hspace{0.2 cm} \sim \lambda^2u_+  \mapsto \hspace{0.2 cm} \sim \lambda^{2+m_+}v_+.\]
Thus, $a = m_--m_+$ and $b = 2+m_+-m_-$. The proposition now follows from the definition of $s^\sharp_\pm$. 
\end{proof} 

\section{Cobordism inequalities and concordance invariance}

\subsection{For the $2^l$ invariants} Let us prove Theorem \ref{thm_s_sharp_I_inequality}. 

Let $\Sigma$ be an immersed, oriented, component-preserving cobordism from link $L_1$ to $L_2$. Let $l$ be the number of components of $L_1$. Consider a component preserving cobordism $\Sigma_1$ from the unlink $U_l$ to $L_1$ with even genus on each component.

Then the number of factors of $\lambda$ in $\psi^\sharp(\Sigma \circ \Sigma_1)(u_I)$ is at least the number of factors of $\lambda$ in $\psi^\sharp(\Sigma_1)(u_I)$, so
\[m_I(\Sigma \circ \Sigma_1) \geq m_I (\Sigma_1),\]
where, keeping our notation from the previous section, $(-1)^{g(\Sigma)}(I)$ switches the $u_+$ and $u_-$ for each component according to the genus of $\Sigma$ on that component.
Thus, $m_{(-1)^{g(\Sigma_1)}(I)}(\Sigma \circ \Sigma_1)  \geq m_{(-1)^{g(\Sigma_1)}(I)}(\Sigma_1)$, which we can rewrite as
\[-s^\sharp_I(L_2) + \sum 2 \(\left\lfloor \frac{g_k(\Sigma \circ \Sigma_1)}{2}\right\rfloor + c_k(\Sigma \circ \Sigma_1,I)\) + p(\Sigma \circ \Sigma_1) \geq -s^\sharp_{(-1)^{g(\Sigma)(I)}}(L_1) + g(\Sigma_1) + p(\Sigma_1).\]

Since $\Sigma_1$ is component preserving and has even genus on each component, the above simplifies to 
\[ s^\sharp_I(L_2) -s^\sharp_{(-1)^{g(\Sigma)}(I)}(L_1) \leq \sum 2 \(\left\lfloor \frac{g_k(\Sigma )}{2}\right\rfloor + c_k(\Sigma,I)\) + p(\Sigma),\]
so we have completed the proof of Theorem \ref{thm_s_sharp_I_inequality}. 

We can use Theorem \ref{thm_s_sharp_I_inequality} to deduce the following.

\begin{propn} If there is an embedded cobordism $\Sigma$ between $L_1$ and $L_2$ such that $\Sigma$ is component-wise and each component is topologically an annulus, then $s^\sharp_I(L_1) = s^\sharp_I(L_2)$.
\end{propn}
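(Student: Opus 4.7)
The plan is to apply Theorem \ref{thm_s_sharp_I_inequality} twice: once to $\Sigma$ viewed as a cobordism $L_1 \to L_2$, and once to its reverse $\bar{\Sigma}$ viewed as a cobordism $L_2 \to L_1$. The two resulting inequalities will combine to force equality.

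First I would unpack what each term on the right-hand side of Theorem \ref{thm_s_sharp_I_inequality} contributes for our $\Sigma$. Since $\Sigma$ is embedded, $p(\Sigma) = 0$. Since each component of $\Sigma$ is topologically an annulus, $g_k(\Sigma) = 0$ for every $k$, so $\lfloor g_k(\Sigma)/2 \rfloor = 0$. The parity correction $(-1)^{g(\Sigma)}(I) = I$ because every $g_k(\Sigma)$ is even, and the bookkeeping terms $c_k(\Sigma, I)$ all vanish for the same reason (the defining condition requires $g_k$ odd, per Definition \ref{s_sharp_I_defn}). Thus the right-hand side of the inequality collapses to $0$, and Theorem \ref{thm_s_sharp_I_inequality} yields
\[ s^\sharp_I(L_2) - s^\sharp_I(L_1) \leq 0. \]

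Next, I would reverse $\Sigma$ to obtain $\bar{\Sigma}: L_2 \to L_1$. Since an annulus is symmetric under reversing the time direction, $\bar{\Sigma}$ is again an embedded, component-preserving cobordism whose components are all annuli. The identical computation of each term gives
\[ s^\sharp_I(L_1) - s^\sharp_I(L_2) \leq 0, \]
and combining the two inequalities produces the desired equality $s^\sharp_I(L_1) = s^\sharp_I(L_2)$.

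There is no real obstacle here: the proof is a direct bookkeeping application of the previously established cobordism inequality, and the symmetry of annular cobordisms under time-reversal is what makes the bound apply in both directions simultaneously. The only point worth stating carefully is why each ingredient on the right-hand side vanishes, which I would verify explicitly to make the paper self-contained.
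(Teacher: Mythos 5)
Your proof is correct and follows essentially the same approach as the paper: apply Theorem \ref{thm_s_sharp_I_inequality} to $\Sigma$ and to its reverse, noting that the right-hand side vanishes because $\Sigma$ is embedded (so $p=0$) and each component is an annulus (so all genus terms, parity corrections, and bookkeeping terms vanish). The only difference is that you spell out the vanishing of each term more explicitly than the paper does.
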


\begin{proof} Because $\Sigma$ is embedded, we see that $p(\Sigma) = 0$. Because $\Sigma$ is a component-preserving cobordism where each component is an annulus, we see that $s^\sharp_I(L_2) \leq s^\sharp_I(L_1)$. Combining this inequality with the one obtained by running $\Sigma$ backwards from $L_2$ to $L_1$, we see that $s^\sharp_I(L_1) = s^\sharp_I(L_2)$, as desired.
\end{proof}

\subsection{For the invariant $s^\sharp(L)$} We now prove Theorem \ref{thm_s_sharp_cobordism_ineq}.

Let $\Sigma$ be an immersed, oriented cobordism from $L_1$ to $L_2$ such that every component has boundary components in both $L_1$ and $L_2$.

Then for $\Sigma_1$ a connected cobordism from the unknot to $L_1$, $\Sigma \circ \Sigma_1$ is also connected. Because $m_\pm$ counts the number of factors of $\lambda$ picked up, we get
\[m_\pm (\Sigma \circ \Sigma_1) \geq m_\pm (\Sigma_1).\]
The formula for $s^\sharp_\pm$ sees either $m_\pm$ or $m_\mp$, depending on whether the genus is even or odd, but to compute $s^\sharp$ we add $s^\sharp_\pm$ together, so, either way, we get
\[-s^\sharp(L_2) + 2g(\Sigma \circ \Sigma_1) + 2p(\Sigma \circ \Sigma_1) \geq 
-s^\sharp(L_1) +  2g(\Sigma_1) + 2p(\Sigma_1),\]
so
\[-s^\sharp(L_1) +s^\sharp(L_2)  \leq 2p(\Sigma) + 2g(\Sigma \circ \Sigma_1) - 2g(\Sigma_1)\leq 2p(\Sigma) -\chi(\Sigma)+\ell,\]
where $\ell = \#(L_1) -\#(L_2)$ where $\#(L_i)$ is the number of components of $L_i$. If $\Sigma$ is embedded, then $p(\Sigma) = 0$.

\section{Computations of $s^\sharp_\pm$}

\subsection{Links that bound singular complex curves}

We will compute $s^\sharp_\pm$ for certain knots and links using cobordisms that come from complex curves. As in the definition of $s^\sharp_\pm$, we will want to consider only immersions of connected cobordisms. For a complex curve $C$, we will say that $C$ is irreducible in a ball $B \subset \cc^2$ if $C \cap B$ is the image of an immersion of a connected cobordism into $B$.

Another way to phrase this is the following: If $U\subset \Sigma$ is the open subset where the curve is smooth, then we say that $\Sigma$ is irreducible in $B$ if the intersection $B \cap U$ is connected.

Note that a curve that is cut out by an irreducible polynomial does not necessarily satisfy this condition for every ball; for example,  $x^3 + x^2 - y^2$ is irreducible, but in a small ball around $(0,0)$ the surface it cuts out looks like two disks through the origin.

We start with a proof of Lemma \ref{lemma_curve_with_double_pt}, which we restate here.

\begin{lemma} Suppose $L \subset S^3$ bounds an immersed complex curve $\Sigma_1 \subset B^4$, which is embedded except for one transverse double point. We further suppose that $\Sigma_1$ is irreducible in $B$. Then $s^\sharp(L) = 2g  - 1$, where $g$ is the genus of an embedded complex curve with boundary $L$, which necessarily exists. In particular, $s^\sharp_+(L) = g$ and $s^\sharp_-(L) = g-1$.
\label{one_double_point_lemma}
\end{lemma}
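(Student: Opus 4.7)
The plan is to compute $s^\sharp_\pm(L)$ directly from two natural cobordisms $U \to L$: the embedded perturbation $\Sigma_0 \subset B^4$ of genus $g$ (obtained by replacing $P(x,y) = 0$ with $P(x,y) + \epsilon = 0$ for small generic $\epsilon$), and the original singular curve $\Sigma_1$ itself, viewed as an immersed cobordism of genus $g-1$ with one positive double point. The double point is positive because complex intersections always have positive sign, and both cobordisms are connected: $\Sigma_1$ by the irreducibility-in-$B$ hypothesis, and $\Sigma_0$ as a smooth perturbation of a connected immersion.

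For the upper bound I would plug each of $\Sigma_0$ and $\Sigma_1$ into the defining formula for $s^\sharp_\pm$ and track the parity correction. Since $g(\Sigma_0) = g$ and $g(\Sigma_1) = g - 1$ have opposite parities, exactly one of the two cobordisms falls into the ``even genus'' branch $s^\sharp_\pm = g(\Sigma) + p(\Sigma) - m_\pm(\Sigma)$ and the other into the ``odd genus'' branch $s^\sharp_\pm = g(\Sigma) + p(\Sigma) - m_\mp(\Sigma) \pm 1$. Using $m_\pm \geq 0$, the even-genus cobordism yields $s^\sharp_+(L) \leq g$ while the odd-genus one yields $s^\sharp_-(L) \leq g - 1$, regardless of the parity of $g$; summing produces $s^\sharp(L) \leq 2g - 1$.

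Well-definedness of $s^\sharp_\pm$ forces the two computations to agree, and equating them in either parity produces the identities $m_-(\Sigma_0) = m_+(\Sigma_1) + 1$ and $m_-(\Sigma_1) = m_+(\Sigma_0) + 1$, consistent with the upper bounds. To upgrade the bounds to equalities I would localize at the double point. Inside a small ball $B$ around it, $\Sigma_1$ is the two transverse holomorphic disks $\{zw = 0\} \subset \cc^2$ with boundary a positive Hopf link, while $\Sigma_0$ is the complex smoothing $\{zw = \epsilon\}$, an embedded annulus with the same boundary. Writing both cobordisms as a common piece $\Sigma_1 \setminus B : U \to L \amalg \text{Hopf}$ followed by one of two local caps (two immersed disks, or one embedded annulus) on the Hopf factor, the problem reduces to computing these two local caps as maps $I'(\text{Hopf}) \to \qq[[\lambda]]$ and assembling via the K\"unneth/excision isomorphism $I'(L \amalg \text{Hopf}) \simeq I'(L) \otimes I'(\text{Hopf})$. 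The irreducibility hypothesis ensures that the image of the unknot generator under $\psi^\sharp(\Sigma_1 \setminus B)$ is not $\lambda$-divisible in the Hopf component that matters, and combining this with the explicit local comparison (the annulus cap and the two-disk cap differ by a single factor of $\lambda$) pins down $m_+(\Sigma_0) = 0$ and $m_-(\Sigma_0) = 1$, yielding $s^\sharp_+(L) = g$ and $s^\sharp_-(L) = g - 1$.

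The hardest step is this last local computation together with the irreducibility argument: the parity bookkeeping yields $s^\sharp(L) \leq 2g - 1$ essentially for free, but ruling out extra factors of $\lambda$ and the degenerate possibility $s^\sharp_+ - s^\sharp_- = 2$ requires using the complex-analytic content of $\{zw = 0\}$ versus $\{zw = \epsilon\}$ and the irreducibility hypothesis in an essential way to identify a specific non-$\lambda$-divisible Floer class.
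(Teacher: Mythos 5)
Your setup and upper-bound argument are essentially the same as the paper's: you work with the pair of cobordisms $\Sigma_1$ (immersed, genus $g-1$, one positive double point) and its embedded complex smoothing $\Sigma_0$ (genus $g$), note that they have opposite genus parity, plug each into the defining formula for $s^\sharp_\pm$ with $m_\pm \geq 0$, and conclude $s^\sharp_+(L) \leq g$ and $s^\sharp_-(L) \leq g-1$. The relations $m_-(\Sigma_0) = m_+(\Sigma_1)+1$ and $m_-(\Sigma_1) = m_+(\Sigma_0)+1$ that you extract from well-definedness are correct and are exactly what the paper obtains by composing with the punctured torus $T$ and the once-doubled annulus $S$ and matching coefficients.

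The gap is in the lower bound, and it is a real one. You attribute the non-$\lambda$-divisibility of $\psi^\sharp(\Sigma_1 \setminus B)(u_+)$ to the irreducibility hypothesis, but irreducibility only gives you that the cobordism is connected (which is needed for $m_\pm$ to even be defined in the $s^\sharp_\pm$ framework); it carries no information about divisibility by $\lambda$. Nor can a purely local computation near the double point supply this: the local model $\{zw=0\}$ versus $\{zw=\epsilon\}$ tells you how the two cobordism maps \emph{differ} (by one twist move, hence by one factor of $\lambda$ up to units), which is the same information your well-definedness identities already encode, but it cannot pin down the absolute value of either $m_\pm(\Sigma_0)$. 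The missing ingredient is Kronheimer and Mrowka's non-vanishing theorem for complex curves (Section 5.4 of \cite{khgfi}), which asserts that the map induced by a punctured complex curve is not a multiple of $\lambda$, so that $\min(m_+,m_-)=0$ for both $\Sigma_0$ and $\Sigma_1$. Combining that with your relation $m_-(\Sigma_0)=m_+(\Sigma_1)+1 \geq 1$ immediately forces $m_+(\Sigma_0)=0$, and then the companion relation gives $m_-(\Sigma_0)=1$, closing the argument. Without invoking that global non-vanishing result, the degenerate possibility (e.g.\ $m_+(\Sigma_0)=1$, $m_-(\Sigma_0)=2$, giving $s^\sharp_+ - s^\sharp_- = 2$ and $s^\sharp < 2g-1$) is not ruled out by anything you have written.
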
 

\begin{proof} We may suppose that the singularity is at $(0,0) \in \cc^2$. If the curve is cut out by $P(x,y) = 0$, then $(0,0)$ being a singularity means that there are no constant or linear terms in $P(x,y)$, and the fact that the double point is transverse means that the quadratic term can be written as $(ax+by)(cx+dy)$ where $ad \neq bc$. Applying a linear transformation, we can assume that the quadratic term of $P(x,y)$ is $xy$. Note that the double point had to be a positive double point.

Consider perturbing it to $P(x,y) -\epsilon$ for some small $\epsilon$ to get a surface $\Sigma_2$ in $B^4$ that bounds the same knot (up to isotopy). 

Then $\Sigma_2$ will have one higher genus than $\Sigma_1$, because away from the origin, the addition of $\epsilon$ has no topological effect, and near the origin, the effect of the addition of $\epsilon$ is akin to perturbing the $xy$ as an immersed surface with boundary given by the Hopf link to the Seifert surface of the Hopf link, which is an annulus. The irreducibility in $B$ means that this change adds a tube from one component of the surface to itself. 

Now, we have complex curves $\Sigma_1$ and $\Sigma_2$, both with boundary given by $L$ (up to isotopy), such that $\Sigma_1$ has one transverse double point, and $\Sigma_2$ is embedded, but has one higher genus. Let $(S^3 \times I, \Sigma_1^0)$ and $(S^3 \times I, \Sigma_2^0)$ be the cobordisms in $S^3 \times I$ from $(S^3, U)$ to $(S^3, L)$, where $U$ is the unknot, that arise by puncturing $(B^4, \Sigma_1)$ and $(B^4, \Sigma_2)$ respectively at points on the corresponding surfaces.

According to the non-vanishing result of Section 5.4 of \cite{khgfi}, $\psi^\sharp(\Sigma_1^0)$ and $\psi^\sharp(\Sigma_2^0)$ are both not divisible by $\lambda$, which means that at least one of $m_+(\Sigma_1^0)$ and $m_-(\Sigma_1^0)$ is zero, and the same holds for $\Sigma_2^0$. In \cite{khgfi}, Kronheimer and Mrowka only use this result for knots, but the proof applies to links as well. 

Now consider $T$ to be the genus 1 surface from the unknot to itself (that is the twice punctured torus), and $S$ to be an immersed annulus with one positive double point from the unknot to itself. 

Note that as we have mentioned above, $\psi^\sharp(T)$ induces the map
\begin{align*}
\psi^\sharp(T)(u_+) &=  2u_-\\
\psi^\sharp(T)(u_-) & = 2\lambda^2u_+.
\end{align*}

Because $S$ arises from the cylinder by way of a single positive twist move, $S$ induces the map
\begin{align*}
\psi^\sharp(S)(u_+) &=  c\lambda u_+\\
\psi^\sharp(S)(u_-) & = c \lambda u_-
\end{align*}
for a constant $c$ which is not a multiple of $\lambda$.

Then consider 
\begin{diagram}
U & \rTo^T & U & \rTo^{\Sigma_1^0}  & L\\
\end{diagram}
and
\begin{diagram}
U & \rTo^S & U & \rTo^{\Sigma_2^0} & L\\
\end{diagram}
These two cobordisms have the same genus and the same number of positive double points, so by looking at what the twist and handle moves do, we see that they induce the same map on $I'$.

The first one induces
\[u_+ \mapsto 2 u_- \mapsto 2 \lambda^{m_-(\Sigma_1^0)} v_-\]
\[u_- \mapsto 2\lambda^2 u_+ \mapsto 2 \lambda^{2+ m_+(\Sigma_1^0)} v_+\]

and the second one induces
\[u_+ \mapsto c\lambda v_+ \mapsto c\lambda^{1+m_+(\Sigma_2^0)} v_-\]
\[u_- \mapsto c\lambda v_- \mapsto c\lambda^{1+m_-(\Sigma_2^0)}v_+\]
where $c$ is not a multiple of $\lambda$.

Setting them to be equal, we see that $m_-(\Sigma_2^0) \geq 1$, so by our above discussion, $m_+(\Sigma_2^0) =0$.

Similarly, $m_-(\Sigma_1^0) \geq 1$, so by our above discussion, $m_+(\Sigma_1^0) =0$. Then, comparing the other terms, we see that $m_-(\Sigma_1^0)$ and $m_-(\Sigma_2^0)$ must both be equal to $1$, so, using either $\Sigma_1$ or $\Sigma_2$, we see that $s^\sharp(L) = 2g-1$, and indeed, $s^\sharp_+(L) = g$ and $s^\sharp_-(L) = g-1$. 
\end{proof}

Now, we would like to show that complex curves with singularities can be put in the situation of the above lemma: 

\begin{lemma} If there is a complex curve $\Sigma$ in $B^4$ with boundary $L \subset S^3$ that has at least one singularity, then there is a complex curve $\Sigma_1$ in $B^4$ with exactly one double point, which is transverse and whose boundary is isotopic to $L$ in $S^3$.
\label{make_transverse}
\end{lemma}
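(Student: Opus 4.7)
The plan is to first apply the classical morsification of plane curve singularities to replace every singularity of $\Sigma$ by a collection of transverse double points, and then to smooth away all but one of the resulting nodes by local holomorphic perturbations.

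For the morsification step, at each isolated singular point $p$ of $\Sigma$ I would choose a small holomorphic coordinate ball $B_p \subset B^4 \setminus \partial B^4$ around $p$ in which $\Sigma$ is cut out by a holomorphic function $f_p$. By the classical morsification theorem for plane curve singularities, for a generic holomorphic perturbation $g_p$ and sufficiently small $t > 0$, the curve $\{f_p + t g_p = 0\} \cap B_p$ has only transverse double points as singularities. Arranging the $B_p$ to be pairwise disjoint and disjoint from $\partial B^4$, and patching these local perturbations into $\Sigma$ outside the $B_p$ (which is immediate when $\Sigma$ is algebraic, by taking a single global perturbation $P + t Q$ of its defining polynomial by a generic polynomial $Q$), produces a complex curve $\Sigma' \subset B^4$ whose singular set consists entirely of transverse double points, with boundary isotopic to $L$ in $S^3$ since the deformation is compactly supported in the interior.

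For the reduction to a single node, suppose $\Sigma'$ has transverse double points $p_1, \dots, p_N$ with $N \geq 2$. I would fix $p_1$ and smooth $p_2, \dots, p_N$ one by one. Near each $p_i$ with $i \geq 2$, I would choose holomorphic coordinates $(x,y)$ in which $\Sigma'$ is locally cut out by $xy = 0$ in a small ball $B_{p_i}$, and replace this local equation with $xy = \epsilon_i$ for a small $\epsilon_i \neq 0$. Inside $B_{p_i}$ this replaces the cone on the positive Hopf link by its Seifert annulus, smoothing the node and adding one handle to the global surface, but the boundary of the curve in $S^3$ is unchanged because the modification is supported in the interior of $B^4$. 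After performing this at every $p_i$ with $i \geq 2$, the result is a complex curve $\Sigma_1 \subset B^4$ whose only singularity is the transverse double point at $p_1$ and whose boundary is isotopic to $L$.

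The essential technical input is the morsification theorem; the local smoothing step is elementary once a nodal model is in place. The most delicate point I expect to address is the patching of local holomorphic deformations into an honest global complex subvariety of $B^4$. This is immediate in the algebraic setting, which covers the applications in this paper (algebraic knots, torus links, and connected sums with quasi-positive knots), since one global polynomial perturbation simultaneously morsifies all singularities, and a further localized polynomial perturbation can smooth any chosen subset of the resulting nodes. In the general analytic case the same patching can be achieved by Runge--Stein approximation on the Stein domain $B^4$.
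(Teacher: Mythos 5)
Your overall plan — first produce a transverse node at one singularity, keep things algebraic so perturbations are global, and then perturb further to eliminate all other singular points — is in the right spirit, and the last sentence of your proposal correctly identifies where the real work lies. But two of your steps are not actually justified, and the second is a genuine gap that contains essentially all the content of the paper's proof.

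First, the morsification step as stated is both imprecise and unnecessary. A \emph{generic} polynomial perturbation $P + tQ$ of a singular plane curve $\{P = 0\}$ typically yields a \emph{smooth} curve, not a nodal one: the critical points of $P + tQ$ become Morse with pairwise distinct, generically nonzero critical values, so $0$ is a regular value. To arrange that the zero set acquires nodes one needs a carefully chosen ($\delta$-constant) deformation, which is not generic. The paper sidesteps this by never morsifying the other singularities at all: it only needs the chosen singularity to become a node, which it achieves directly with the explicit quadratic perturbation $P \mapsto P + \epsilon x y$, and is indifferent to what happens to the others — they only need to disappear.

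Second, and more seriously, the key step — removing all singularities except the chosen node, without destroying that node — is asserted rather than proved. The framing in terms of ``local holomorphic perturbations'' of the node models $xy = 0 \rightsquigarrow xy = \epsilon_i$, patched into ``a further localized polynomial perturbation,'' conceals the difficulty: a complex curve cannot be modified inside a ball while kept fixed outside, so any such smoothing must be global, and it is not automatic that a global perturbation exists which smooths some singularities, preserves the chosen node, and creates no new singularities. This is precisely what the paper proves. After fixing the node at $(0,0)$ via $+\epsilon x y$, the paper rotates $\cc^2$ so that $(0,0)$ is the only singularity on the line $x = 0$, then perturbs by $z x^2$ — which manifestly preserves the transverse node at the origin since the quadratic part becomes $x(\epsilon y + z x)$ — and shows by a parametrized contradiction argument, differentiating the singular-locus equations along a branch $(x(z), y(z), z)$ and deducing $x(z)^2 \equiv 0$, that for generic small $z$ no other singular point survives. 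Your appeal to Runge–Stein approximation does not resolve the patching problem either: approximating inside a ball gives no exact agreement outside, so one does not obtain a complex curve at the seam. What is missing from your proposal is a concrete global perturbation together with an argument that it leaves exactly one transverse double point, and this is the entire technical content of the lemma.
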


\begin{proof}
We may assume that there is a singularity at $(0,0)$, by translating the picture. If the curve is cut out by $P(x,y)$, then the fact that $(0,0)$ is a singularity means that the constant and linear terms of $P$ vanish, so the lowest order terms that $P$ can have are quadratic.

Let us consider perturbing $P(x,y)$ to $P(x,y) + \epsilon xy$. For generic sufficiently small non-zero $\epsilon$, the curve cut out by this perturbation has boundary isotopic to $L$ in $S^3$, and has a transverse double point at $(0,0)$.

We must now get rid of all the other singularities. Suppose $P(x,y)$ is a polynomial that cuts out a curve with boundary $L$, with a transverse double point at $(0,0)$. Observe that for generic $\epsilon$, $P(x,y)$, $P_x(x,y) + \epsilon y$ and $P_y(x,y)+\epsilon x$ cannot all share a common factor. Thus, be Bezout's theorem $P(x,y)$ has finitely many singularities. We may rotate $\cc^2$ so that all the singularities of $P(x,y)$ except $(0,0)$ have non-zero $x$ coordinate.

We will show that for any $\epsilon_0>0$, there is $z$ with $0<|z|<\epsilon_0$ such that the only singularity of $P(x,y) + zx^2$ is at $(0,0)$, that is, we can perturb away all the other singularities.

We will show that this is possible by contradiction. Suppose that it is not true. Then, for any $z$ with $|z|<\epsilon_0$, there is some $(x,y) \neq (0,0)$ such that
\begin{align}
P(x,y) + zx^2 & = 0 \label{singularity_equations1}\\
P_x(x,y)+2zx & = 0 \label{singularity_equations2} \\
P_y(x,y) &  = 0.
\label{singularity_equations3}
\end{align}
Considering $x,y,z$ as variables, these three equations cut out a variety in $\cc^3$, such that taking the projection to the $z$ coordinate $\cc^3 \to \cc$, the image of the variety contains an open disk around $0$. 

This means that there is a component of the variety for which the image of that component alone under this projection also surjects to a (perhaps smaller) open disk around $0 \in \cc$. This component can be parametrised as $(x(z), y(z),z)$ for some smooth functions $x(z)$ and $y(z)$. 

Then, differentiating \eqref{singularity_equations1}, we get
\begin{align}
P_x(x,y) \frac{dx}{dz} + P_y(x,y) \frac{dy}{dz} + x^2 + 2zx\frac{dx}{dz} & = 0
\label{singularity2}
\end{align}
Then from \eqref{singularity_equations2}, $P_x(x,y) =-2zx$, and from \eqref{singularity_equations3}, $P_y(x,y) = 0$, so from \eqref{singularity2}, we get $x^2 = 0$ for all $z$ in the disk around $0$. This means $x(z) = 0$ for all $z$ in the disk, so $P(x,y) = 0$, $P_x(x,y) = 0$, and $P_y(x,y) = 0$. But this means in particular, that $(x(z),y(z))$ is also a singularity of $P$, so it must be $(x,y) =(0,0)$, which is a contradiction.

Thus, it cuts out the desired curve $\Sigma_1$.

\end{proof}

\begin{cor} For an algebraic knot $K$, such as a right handed torus knot, $s^\sharp(L) = 2g-1$.
\end{cor}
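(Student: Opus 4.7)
The plan is to directly combine the two preceding lemmas. By definition, an algebraic knot $K$ arises as the link of an isolated, locally irreducible plane curve singularity: we may choose a polynomial $P(x,y)$ whose zero locus has an irreducible singular point at the origin, and a small Milnor ball $B^4 \subset \cc^2$ around $0$ so that $\Sigma = \{P=0\} \cap B^4$ is a complex curve with $\partial \Sigma = K \subset S^3$. Because the singularity is irreducible, the curve $\Sigma$ has only one local branch at $0$, so the smooth locus of $\Sigma \cap B^4$ is connected; that is, $\Sigma$ is irreducible in $B^4$ in the sense defined in Section 4.

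Next, apply Lemma \ref{make_transverse} to $\Sigma$. This produces a complex curve $\Sigma_1 \subset B^4$ with boundary isotopic to $K$, whose only singularity is a single transverse double point. I would then verify that the perturbation used to obtain $\Sigma_1$ preserves irreducibility in $B^4$: away from a tiny ball $B_\delta$ around the double point, the perturbed curve is $C^\infty$-close to $\Sigma \setminus B_\delta$ and hence remains smooth and connected; inside $B_\delta$ the perturbed curve consists of two transverse smooth disks, whose boundaries on $\partial B_\delta$ form a (small) Hopf link, and these two disks are joined to each other through the connected piece on $B^4 \setminus B_\delta$ because the partial smoothing of a singularity with one local branch yields a connected surface (cf.\ the connectedness of the Milnor fiber of an irreducible singularity).

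Finally, with $\Sigma_1$ now in the exact setup of Lemma \ref{lemma_curve_with_double_pt}, that lemma yields $s^\sharp_+(K)=g$ and $s^\sharp_-(K)=g-1$, hence $s^\sharp(K)=2g-1$, where $g$ is the genus of an embedded smoothing of $\Sigma_1$ (equivalently, the Milnor fiber genus, which is known to realize the $4$-ball genus of $K$). The case of a right-handed torus knot $T_{p,q}$, $\gcd(p,q)=1$, is the prototypical example, corresponding to $P(x,y) = x^p - y^q$.

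The step I expect to require the most care is the irreducibility verification for $\Sigma_1$, since Lemma \ref{make_transverse} does not assert it and the definition of ``irreducible in $B$'' is global rather than local; the argument above reduces this to the single well-known fact that the Milnor fiber of an irreducible plane curve singularity is connected, which is the only nontrivial input beyond the two lemmas already established.
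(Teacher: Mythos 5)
Your overall plan matches the paper's proof: apply Lemma \ref{make_transverse} to produce $\Sigma_1$ with a single transverse double point, check irreducibility in $B$, then invoke Lemma \ref{lemma_curve_with_double_pt}. You also correctly identify the crux as the irreducibility of the perturbed curve $\Sigma_1$ in $B$, which Lemma \ref{make_transverse} does not assert. Where you diverge from the paper is in how you handle that step, and your version has a gap.

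Your argument chops $B$ into a tiny ball $B_\delta$ around the node and its complement, and claims that outside $B_\delta$ the perturbed curve is $C^\infty$-close to the unperturbed $\Sigma$ while inside $B_\delta$ it is two transverse disks bounding a Hopf link. These two pictures require incompatible choices of scale: if the perturbation size $\epsilon$ is taken small relative to $\delta$, then $\Sigma_1 \cap \partial B_\delta$ is still close to the algebraic knot $K$ (one circle), not a Hopf link, and $\Sigma_1 \cap B_\delta$ is not just two disks; whereas if $\delta$ is taken small relative to $\epsilon$ so that $\Sigma_1 \cap B_\delta$ really is two transverse disks, then $\Sigma_1 \cap (B \setminus B_\delta)$ need not be $C^\infty$-close to $\Sigma \cap (B \setminus B_\delta)$ and the connectedness doesn't follow by proximity. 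The appeal to ``connectedness of the Milnor fiber'' does not directly apply either, because $\{P + \epsilon xy = 0\}$ is a partial smoothing that still carries a singular point, not the Milnor fiber $\{P = \epsilon\}$; one would need a separate argument that this partial smoothing is connected away from the node. So as written the verification does not close.

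The paper avoids all of this with a purely global observation: the boundary $\partial \Sigma_1 = K$ is a knot, hence connected, so at most one connected component of the smooth locus of $\Sigma_1$ can touch $\partial B$. If $\Sigma_1$ were not irreducible in $B$, some other component would have empty boundary, giving a compact complex curve sitting inside $B \subset \cc^2$; the coordinate functions restricted to it would be bounded holomorphic functions on a compact Riemann surface, hence constant, which is absurd. This one-line argument works for any immersed complex curve whose boundary is connected and applies directly to $\Sigma_1$, making the local branch-counting and Milnor fiber considerations unnecessary. I recommend replacing your irreducibility step with this compactness argument; the rest of your proof then goes through as you wrote it.
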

\begin{proof} An algebraic knot is, by definition, the link of a singular complex curve. Applying Lemma \ref{make_transverse}, we can replace it with an immersed curve with a single non-embedded point, so that that point is a transverse double point. Because the knot is connected, only one of the components of the singular complex curve in $B$ has nontrivial boundary. Thus, if the singular complex curve is not irreducible in $B$, then, viewing it as a curve in $\cc^2$, it must have a component which is entirely contained inside $B$, which is not possible. So we have a complex curve which is irreducible in $B$ that is embedded away from a single transverse double point, and applying Lemma \ref{one_double_point_lemma}, we are done.
\end{proof}

In addition, we can deduce the following.

\begin{cor} Consider the torus link $T_{md,nd}$ with $\gcd(m,n) = 1$, $m \geq n$, $d \geq 2$ and $md \geq 3$. All positive torus links with more than one component, except the Hopf link, can be written in this way. Then, $s^\sharp(T_{md,nd}) = 2g-1$, where $g$ is the genus of an embedded connected complex curve in $B^4$ whose boundary is $T_{md,nd}$.
\end{cor}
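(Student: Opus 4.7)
The plan is to reduce to Lemma \ref{one_double_point_lemma}: if one can exhibit a complex curve $\Sigma_1 \subset B^4$ whose boundary is isotopic to $T_{md,nd}$, which is embedded away from a single transverse double point, and which is irreducible in $B^4$, then Lemma \ref{one_double_point_lemma} immediately yields $s^\sharp_+(T_{md,nd}) = g$ and $s^\sharp_-(T_{md,nd}) = g-1$, hence $s^\sharp(T_{md,nd}) = 2g - 1$. The natural starting candidate, the singular curve $\{x^{md} - y^{nd} = 0\}$, has boundary $T_{md,nd}$ in a small $S^3$ around the origin, but it factors as $\prod_{k=0}^{d-1}(x^m - \zeta^k y^n)$ for a primitive $d$th root of unity $\zeta$, so its $d$ irreducible algebraic components all meet at the origin; it is therefore not irreducible in any ball. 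This is the essential new difficulty compared with the algebraic knot case, where connectedness of the boundary forced irreducibility in $B$ automatically.

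I would overcome this by first passing to the Milnor fiber $\{x^{md} - y^{nd} = \epsilon\} \cap B^4$ for small nonzero $\epsilon$. The polynomial $x^{md} - y^{nd} - \epsilon$ is irreducible in $\cc[x,y]$, so its zero locus is an irreducible smooth complex curve; in particular its intersection with $B^4$ is connected with boundary still isotopic to $T_{md,nd}$. Denote the genus of this smooth Milnor fiber by $g$. To introduce a single transverse double point, I would deform further within the versal unfolding of the plane curve singularity $x^{md} - y^{nd}$. The discriminant in the base of the versal deformation is a hypersurface whose generic point parameterizes a curve with exactly one node, obtained topologically by pinching a vanishing cycle on the Milnor fiber. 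Moving along a generic arc from the Milnor fiber parameter to a generic point of the discriminant produces a complex curve with exactly one transverse double point and no other singularities. Equivalently, one may write down an explicit 1-parameter family $x^{md} - y^{nd} - \epsilon + tR(x,y) = 0$ for a suitably chosen polynomial $R$ and identify a value of $t$ at which the fiber has exactly one node, invoking Lemma \ref{make_transverse} to clean up any accidental additional singularities.

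The main obstacle is to verify that this nodal curve $\Sigma_1$ is irreducible in $B^4$, equivalently that the vanishing cycle is non-separating on the Milnor fiber. Both properties hold generically, but a robust way to check them concretely is to arrange for the polynomial defining $\Sigma_1$ to be algebraically irreducible in $\cc[x,y]$: algebraic irreducibility of the defining polynomial forces topological connectedness of its zero locus, which yields irreducibility in $B^4$ as soon as the ball is chosen large enough to contain the double point and to see the boundary link. With such a $\Sigma_1$ in hand, Lemma \ref{one_double_point_lemma} applied to $\Sigma_1$ gives $s^\sharp_+(T_{md,nd}) = g$, $s^\sharp_-(T_{md,nd}) = g - 1$, and therefore $s^\sharp(T_{md,nd}) = 2g - 1$, completing the proof.
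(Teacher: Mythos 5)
Your overall strategy matches the paper's: produce a complex curve with a single transverse double point, irreducible in $B^4$, whose boundary is $T_{md,nd}$, and feed it into Lemma \ref{one_double_point_lemma}. You also correctly identify the key new difficulty — in the link case, connectedness of the boundary no longer automatically forces irreducibility in $B$. But there are two genuine gaps where your argument waves at the hardest part of the proof.

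First, you do not actually establish algebraic irreducibility of the perturbed polynomial. You write that ``a robust way to check them concretely is to arrange for the polynomial defining $\Sigma_1$ to be algebraically irreducible,'' but you never exhibit a perturbation for which this holds, and it is not automatic: $x^{md}-y^{nd}$ factors as $\prod_i(x^m-\omega^i y^n)$, and a careless perturbation (e.g.\ perturbing each factor separately) stays reducible. In the paper, this is where the real work is: the perturbation $P_{\epsilon,\delta}=x^{md}-y^{nd}-\epsilon x^2+\delta y^2$ is chosen specifically, and its irreducibility is proved by a delicate argument with $(n,m)$-weighted degrees, analyzing possible factorizations $A\cdot B$ by comparing top weighted-homogeneous parts against the factorization of $x^{md}-y^{nd}$ and then bounding degrees of lower-order terms, with separate low-degree cases $md=3,4$. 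Invoking Lemma \ref{make_transverse} to ``clean up accidental singularities'' does not help here either: that lemma produces a single transverse double point but says nothing about preserving irreducibility in $B$, which is exactly the property you still need to verify. (Indeed, the paper does not apply Lemma \ref{make_transverse} in this case at all.)

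Second, the deduction from global algebraic irreducibility to irreducibility in $B^4$ is not correct as stated. You cannot choose the ball ``large enough''; it must remain a Milnor ball for the singularity, so that $\partial B\cap\Sigma$ is still the torus link — that forces the ball to be small. And for a fixed small ball, connectedness of the affine curve $\{P_{\epsilon,\delta}=0\}\subset\cc^2$ does not by itself imply that $B\cap(\text{smooth locus})$ is connected: the merging of components could a priori happen outside $B$. The argument the paper supplies, and that your proposal is missing, is the observation that near $\partial B$ and outside $B$ the perturbed curve still splits into $d$ disjoint sheets (approximating the $d$ branches $x^m=\omega^i y^n$), so they cannot merge outside; connectedness of the global curve then forces the merging to take place inside $B_{1/2}\subset B$, which is what yields irreducibility in the ball. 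Without this, the claim that the vanishing cycle is non-separating (equivalently, irreducibility in $B$) remains unproved.
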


\begin{proof} By definition, $T_{md,nd}$ is the link of the singularity of $x^{md} - y^{nd} =0$ at $(0,0)$, and we can also view it as the intersection of $x^{md} - y^{nd}= 0$ with $\partial B = S^3$ for $B$ a unit ball centered at the origin. Let us consider the perturbation $x^{md} - y^{nd}-\epsilon x^2 +\delta y^2 = 0$. Observe that for sufficiently small $\epsilon$ and $\delta$, this cuts out a link isotopic to that cut out by $x^m-y^n$ on $S^3$. 

Let $P_{\epsilon,\delta}(x,y) = x^{md} - y^{nd}-\epsilon x^2 +\delta y^2$. At this point it suffices to show that for sufficiently small generic $(\epsilon,\delta)$, the surface cut out by $P_{\epsilon,\delta}$ is irreducible in $B$. 

Let us start by showing that for generic small $\epsilon$ and $\delta$, $P_{\epsilon,\delta}$ is irreducible. Since irreducibility is an open condition in the coefficients, it suffices to show that $P_{\epsilon,0}$ is irreducible for generic small $\epsilon$. Define the $(n,m)$ degree of the monomial $x^iy^j$ to be $in+mj$, and denote it $\deg_{(n,m)}(x^iy^j)$.

Suppose we $P_{\epsilon,0}$ is reducible, so that we may write it as
\[P_{\epsilon,0}(x,y) = A(x,y) B(x,y).\]
Let $A_0(x,y)$ and $B_0(x,y)$ are the top $(n,m)$ degree parts of $A(x,y)$ and $B(x,y)$. Write
\[(A_0(x,y) + A_r(x,y))(B_0(x,y) + B_r(x,y))\]
Suppose without loss of generality $\deg_{(n,m)}A_0 - \deg_{(n,m)}A_r \leq \deg_{(n,m)}B_0 - \deg_{(n,m)}B_r$. Then write $A_1(x,y)$ to be the highest $(n,m)$ degree part of $A_r$ and $B_1(x,y)$ denote the part of $B_r$ with $(n,m)$ degree $\deg_{(n,m)}B_0 - \deg_{(n,m)}A_0 + \deg_{(n,m)}A_1$, so that this is either the top $(n,m)$ degree part of $B_r$ or it is $0$.

Then, we must have that
\[A_0(x,y)B_0(x,y) = x^{md}-y^{nd}\]
\[A_1(x,y)B_0(x,y) + A_0(x,y)B_1(x,y)\]
is either $0$ or $\epsilon x^2$.

Observe that $x^m-y^n$ is irreducible (we can see this by observing that the intersection of $x^m-y^n =0$ with any ball has one component). Similarly $x^m-\omega^iy^n$ for a $\omega^i$ any $d$th root of unity. This means that
\[A_0(x,y) B_0(x,y) = (x^m-y^n)(x^m-\omega y^n) \cdots (x^m-\omega^{d-1} y^n)\]
where $\omega$ is a primitive $d$th root of unity, and $A_0(x,y)$ is the product of some of the factors $x^m-\omega^iy^n$, and $B_0(x,y)$ is the product of the rest.

In particular, this means that it is not possible that 
\[A_1(x,y)B_0(x,y) + A_0(x,y)B_1(x,y) = 0\]
because this would mean
\[A_1(x,y)B_0(x,y) =- A_0(x,y)B_1(x,y)\]
with one of the sides being non-zero, but then both sides would have to be multiples of both $A_0(x,y)$ and $B_0(x,y)$, which would mean that they would have to have $(n,m)$ degree $mnd$, but $A_1$ and $B_1$ are defined to have small enough degree that this is not possible.

Thus, we have 
\[A_1(x,y)B_0(x,y) + A_0(x,y)B_1(x,y) = \epsilon x^2.\]
This means that we cannot have that $B_1(x,y) = 0$, because if $B_1(x,y) = 0$ then $B_0(x,y)|\epsilon x^2$, but $B_0(x,y)$ is a product of $(x^m-\omega^iy^n)$ for some non-empty subset of integers $i$, and this cannot be a a factor of $x^2$. 

Observe that $A_0(x,y)$, $B_0(x,y)$, $A_1(x,y)$ and $B_1(x,y)$ are all $(n,m)$ degree homogenous in such a way that $A_1(x,y)B_0(x,y)$ and $A_0(x,y)B_1(x,y)$ are homogenous of the same degree, and this degree must be $\deg_{(n,m)} \epsilon x^2 = 2n$. But $A_0$ and $B_0$ have degrees adding up to $mnd$, so the degree of $A_1(x,y)B_0(x,y) + A_0(x,y)B_1(x,y)$ is at least $mnd/2$. Now if $md \geq 5$, we are done.

If $md = 4$ then we must have that $A_1$ and $B_1$ are non-zero of degree $0$, and also this would mean that $(A_0(x,y) + A_1(x,y))(B_0(x,y) + B_1(x,y)) = P_{\epsilon,0}(x,y)$ would have a non-trivial constant term, which is not possible.

If $md = 3$, then we must have that $d = 3$ and $m,n=1$, and $P_{\epsilon,0}(x,y) = x^3-y^3-\epsilon x^2$, and if this factors then the top total degree of one of the terms in $x$ and $y$ must be linear and therefore must be $(x-y), (x-\omega y)$ or $x-\omega^2 y$. If this factor has a constant term, then the other factor must have its lowest order term being quadratic, but its highest order term is also quadratic, so it must be homogenous of degree $2$, but then it must be a factor of $x^3-y^3$, so it cannot also be a factor of $x^2$, so this is impossible.

Thus, we have that for generic $(\epsilon,\delta)$, $P_{\epsilon,\delta}(x,y)$ is irreducible.

Now, we would like to show that for sufficiently small generic $\epsilon,\delta$, it is also irreducible in a ball. Observe that for sufficiently small $\epsilon,\delta$, we can ensure that in $B \backslash B_{1/2}$ where $B_{1/2}$ is the ball of radius $1/2$ centred at the origin, $P_{\epsilon,\delta}(x,y) = 0$ cuts out a surface sufficiently close to the one cut out by $x^m-y^n = 0$ that it also has $d$ components, one corresponding to each component of $T_{md,nd}$. Observe that outside of $B$, these $d$ irreducible components do not intersect each other.

In particular, for any small $\eta>0$, we can choose $\epsilon$ and $\delta$ such that for $|y|>\eta$, there are $md$ distinct values of $x$ such that $(x,y)$ is on the curve for each value of $y$. We can also choose $\eta$ small enough such that for $\epsilon$ and $\delta$ small enough, for $|y|<\eta$ all solutions in $x$ for each fixed $y$ are such that $(x,y)$ is inside a small ball around $(0,0)$. 

However, because the curve cut out by $P_{\epsilon,\delta}(x,y)$ is irreducible, these components are all part of the same irreducible component. They are not merging into a single component outside of $B$, so they must be merging into a single component inside of $B_{1/2}$.

For generic small $\epsilon$ and $\delta$, it is also the case that the curve cut out by $P_{\epsilon,\delta}(x,y)$ is embedded except at a single transverse double point. Applying Lemma \ref{one_double_point_lemma} gives us the desired equality.
\end{proof}

\begin{lemma} The connected sum of a quasipositive knot $K$ and a non-trivial right handed torus knot, $T_{p,q}$, bounds a complex curve with a singularity.
\end{lemma}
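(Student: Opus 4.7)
The plan is to construct a singular complex curve bounding $K \# T_{p,q}$ by combining a smooth complex curve for the quasipositive knot $K$ with the standard singular algebraic model for $T_{p,q}$, and then performing a holomorphic saddle move, realized by a polynomial perturbation, which converts the split link on the boundary into the connected sum while preserving the singularity at the origin.

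Since $K$ is quasipositive, by Rudolph's theorem there is a polynomial $P_K(x,y)$ and a ball $B_K \subset \cc^2$ so that $\Sigma_K := \{P_K = 0\} \cap B_K$ is a smooth complex curve with $\partial \Sigma_K = K$. The torus knot $T_{p,q}$ is the link of the singularity of $Q(x,y) := x^p - y^q$ at the origin, so $\Sigma_T := \{Q = 0\} \cap B_T$ is a singular complex curve bounding $T_{p,q}$ for a small ball $B_T$ around $(0,0)$, with its only singularity at the origin. I would first translate $P_K$ so that $B_K$ is disjoint from $B_T$; then by a further generic perturbation of the translation, arrange that the affine curves $\{P_K = 0\}$ and $\{Q = 0\}$ meet transversely in $\cc^2$, and choose a ball $B$ containing $B_K$ and $B_T$ such that exactly one transverse intersection $p^*$ of the two curves lies inside $B$, placed as close as desired to $\partial B$ on the portion of $\partial B$ between the arc of $K$ and the arc of $T_{p,q}$.

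Next I would consider the polynomial perturbation
\[P_\epsilon(x,y) := P_K(x,y)\,Q(x,y) + \epsilon\,h(x,y),\]
where $h$ is chosen to vanish at the origin to order strictly greater than $\max(p,q)$ but to satisfy $h(p^*) \neq 0$. For small generic $\epsilon$, the zero set $\{P_\epsilon = 0\} \cap B$ is an irreducible complex curve, by a Bertini-style argument analogous to those used elsewhere in the paper. Near the origin, the local equation is $Q(x,y) + O(|(x,y)|^{\max(p,q)+1}) = 0$, so the curve retains the same singularity type as $Q$ at $(0,0)$, supplying the required singular point. Near $p^*$, the perturbation replaces the transverse crossing of the two sheets by a smooth holomorphic annulus, locally the standard smoothing of $\{xy = 0\}$ into $\{xy = \epsilon\}$. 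Because $p^*$ was placed essentially on $\partial B$ between the two boundary arcs, this local smoothing corresponds on $\partial B$ to a coherent band surgery joining the $K$-arc to the $T_{p,q}$-arc, changing the boundary from the split link $K \sqcup T_{p,q}$ to the connected sum $K \# T_{p,q}$.

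The main obstacle is the geometric step: positioning the transverse intersection $p^*$ close enough to $\partial B$, and between the correct pair of boundary arcs, so that the holomorphic smoothing produces precisely a boundary connected-sum band rather than an interior tube or an unwanted Hopf plumbing on the boundary. This requires care in the choice of translation of $P_K$, the radius and shape of $B$, and, if necessary, exploiting Rudolph's flexibility to place $\Sigma_K$ inside an arbitrarily small region of $\cc^2$ near a prescribed point. Once this placement is arranged, irreducibility of $\{P_\epsilon = 0\}$ follows by standard smoothing arguments, and preservation of the singularity at the origin is immediate from the order-of-vanishing condition on $h$.
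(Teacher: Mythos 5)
Your approach---take the product $P_K \cdot Q$, smooth the resulting transverse intersections away from the origin by a perturbation $+\,\epsilon h$ with $h$ vanishing to high order at the origin, and hope that one of the smoothings realizes the connected-sum band---is genuinely different from the paper's, but it has a gap at exactly the step you flag as the ``main obstacle,'' and I don't think that gap closes without effectively doing something like what the paper does.

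The core difficulty is that smoothing a double point $p^*$ that lies strictly in the interior of $B$ does \emph{not} change the boundary link $\partial B \cap \{P_\epsilon=0\}$. For small $\epsilon$ the boundary in $\partial B$ varies continuously and stays split: you have replaced two transversally intersecting interior sheets near $p^*$ by an interior annulus, so the surface in $B$ becomes connected, but its boundary is still the two-component link $K \sqcup T_{p,q}$, not the knot $K \# T_{p,q}$. Conversely, placing $p^*$ literally on $\partial B$ puts a singular point on the boundary link, which is not allowed. To actually change the boundary you need a band that exits through $\partial B$, i.e.\ a region whose boundary sphere sweeps past $p^*$, and that is precisely what the paper arranges geometrically. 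There is also a prior, unaddressed issue: once you enlarge to a ball $B \supset B_K \cup B_T$, you have no control on $\{P_K = 0\}\cap\partial B$ or $\{Q = 0\}\cap\partial B$; the affine curves extend well beyond the original bidisks and their intersections with a bigger sphere need not be $K$ or $T_{p,q}$ at all. Rudolph's ball for $K$ is a specific bidisk $D_x\times D_{y,\gamma}$, and quasipositive transversality is tied to that shape.

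The paper avoids both problems by not taking a product. It modifies Rudolph's polynomial to
\[
Q(x,y)=\tfrac{1}{(-a)^p}\bigl(P(x,y)(x-a)^p+(y-b)^q\bigr)
\]
with $a\gg b\gg R$: on the original ball $B$ this is a small perturbation of $P$, so the boundary is still $K$, while near $(a,b)$ it has a $(p,q)$ cusp bounding $T_{p,q}$ in a small sphere $\partial B_\epsilon$. It then chooses a path $\beta_1$ on the curve $\{Q=0\}$ from $K$ to $(a,b)$ that avoids critical values of the $y$-projection, and takes the region $B_1 = B \cup B_\epsilon \cup N(\beta_1)$; the transversality of $\{Q=0\}$ to the slices $\{\Re x = \text{const}\}$ along $\beta_1$ guarantees that $\{Q=0\}\cap N(\beta_1)$ is a single band joining a point of $K\subset\partial B$ to a point of $T_{p,q}\subset\partial B_\epsilon$, so $\{Q=0\}\cap\partial B_1$ is $K\#T_{p,q}$. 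The single curve and the tube along a path on it are what produce the band on the boundary; your construction has two disjoint curves inside the neck and an interior smoothing, which does not by itself move the boundary past the split link. If you want to rescue the product approach, you would still need a dumbbell region whose boundary sphere passes the double point (turning the split link into a connected sum as the sphere sweeps by), and you would need to verify that the two boundary arcs being merged are the correct ones; at that point you are essentially reconstructing the paper's path-and-tube argument.

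One smaller point: you invoke ``a Bertini-style argument analogous to those used elsewhere in the paper'' for irreducibility of $\{P_\epsilon=0\}$. The paper's irreducibility arguments (for the torus link polynomials) are bespoke degree-counting arguments, not appeals to Bertini, and the irreducibility needed is irreducibility \emph{in the ball} (connectedness of the smooth locus inside $B$), which neither Bertini nor irreducibility of the affine variety gives you directly; in the paper that is handled via the observation that no component can be contained entirely in $B$.
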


\begin{proof}
Let $P(x,y)$ be the polynomial and $B \subset \cc^2$ be the ball such that $\{x,y|P(x,y) = 0\} \cap \partial B$ is the knot $K$ in $S^3$, as constructed by Rudolph in \cite{rudolph}. In particular, this means that 
\[P(x,y)  = (x-r_1)(x-r_2) \cdots (x-r_n)(x-y) + \epsilon_0\]
for some real numbers $r_i$ and small $\epsilon_0$. The ball is of the form $D_x \times D_{y,\gamma}$, where $D_{y,\gamma}$ is a disk in complex plane whose boundary, $\gamma \subset \cc$ avoids the $2n$ special points $y_0$ in the complex plane that satisfy that $P(x,y) = 0$ does not have $n$ distinct solutions for $x$, and $D_x$ is some large disk in the complex plane.

Let $R$ be a positive real number such that a real four ball of radius $R$ centered at the origin in $\cc^2$ contains $B$. Then choose $a \gg b \gg R$ such that for
\[Q(x,y)  = \frac{1}{(-a)^p}(P(x,y)(x-a)^p + (y-b)^q),\] 
$Q(x,y) = 0$ cuts out the same knot in $\partial B$ as $P(x,y)$.

By abuse of notation, let us denote the resulting knot, which is isotopic to $K$, also by $K$. Let $(x_m,y_m)$ be a point on $K$ such that $x_m$ has maximal real part of $x$ coordinate. 

Observe that for a sufficiently small ball $B_\epsilon$ around $(a,b) \in \cc^2$, where $\epsilon \ll \frac{1}{a}$, $Q(x,y)=0$ cuts out the torus knot $T_{p,q}$.

\begin{claim} For generic $(a,b)$, the curve cut out by $Q(x,y) = 0$ and the curve cut out by $Q_y(x,y) = 0$ intersect at only finitely many points, where $Q_y$ is the polynomial obtained by differentiating $Q(x,y)$ with respect to $y$. 
\end{claim}
\begin{proof} By translating, it suffices to show that the curve cut out by $R(x,y) = 0$ and the curve cut out by $R_y(x,y) = 0$ intersect at only finitely many points, where 
\[R(x,y) = Q(x+a,y+b) = P(x+a,y+b)x^p + y^q = [(x-r_1+a) \cdots (x-r_n+a)(x-y+a-b) + \epsilon_0]x^p + y^q.\]
Let us write $A(x) = (x-r_1+a) \cdots (x-r_n+a)$ and $c = a-b$.

Then, we are trying to show that
\begin{align*}
(A(x) (x-y+c)+\epsilon_0)x^p+y^q &= 0\\
-A(x)x^p+qy^{q-1} = 0
\end{align*}
has finitely many solutions for $x,y \in \cc$ for generic $a$ and $c$. Observe that the second equation allows us to write 
\begin{equation}y^{q-1} = \frac{A(x)x^p}{q},\label{y_to_q-1}\end{equation}
and plugging this into the first equation, it reduces to 
\[(A(x) (x-y+c)+\epsilon_0)x^p+y\frac{A(x)x^p}{q},\]
which is now a linear equation, and can be solved in $y$ to get
\[x = 0 \text{, or } A(x) = 0 \text{, or } y = \frac{A(x) (x+c)+\epsilon_0}{A(x)\(1-\frac{1}{q}\)},\]
though actually the middle case is not possible, because if $A(x) =0$, plugging into our original equation, we would get $\epsilon_0x^p+y^q = 0$ and $y^{q-1} = 0$, which cannot simultaneously be true with $A(x) = 0$, because $0$ is not a root of $A(x)$. 

If $x=0$, then plugging into our original equations, we get $y=0$, which is one possible solution. The other case is that 
\[y = \frac{A(x) (x+c)+\epsilon_0}{A(x)\(1-\frac{1}{q}\)}.\]
This can then be plugged back into \eqref{y_to_q-1} to get 
\[\(\frac{A(x) (x+c)+\epsilon_0}{A(x)\(1-\frac{1}{q}\)}\)^{q-1} = \frac{A(x)x^p}{q},\]
which can be rewritten as
\[A(x)^q x^p(1-1/q)^{q-1} = q \(A(x)(x+c) + \epsilon_0\)^{q-1}.\]
Since, for generic $a$ and $c$, $x=0$ is not a solution to this polynomial equation in $x$, it must only have finitely many solutions $x$, and since we have written $y$ in terms of $x$ already, there are also only finitely many pairs $x,y \in \cc$ satisfying the equations, as desired. 
\end{proof}

Choose $\beta:[0,1] \to \cc$ a path in the $x$ plane from $\beta(0) = x_m$ to $\beta(1) = a$ such that it avoids projections of the intersection points of the curves $Q(x,y)=0$ and $\ddx{Q(x,y)}{y}=0$ on the $x$ plane, except at $1$, where $\beta(1) = a$, and also such that $\frac{d\Re(\beta)(t)}{dt} >0$ for all $t \in [0,1]$. 

Recall that $P(x,y)$ has degree $1$ in $y$, so the $Q(x,y)$ has degree $q$. Then for any point $x \in \beta$, there are exactly $q$ solutions $y$ to $Q(x,y) = 0$, except at $x=a$, where all $q$ solutions come together to one solution with multiplicity $q$.  For $x = x_m$ one of the $q$ solutions is in $B$.

Consider a smooth path $\beta_1:[0,1] \to \cc^2$ sitting over $\beta$, meaning that $p_x(\beta_1(t)) = \beta(t)$ for $p_x$ the projection to the $x$ coordinate, such that $\beta_1(0) = (x_m, y_m)$, $\beta_1(1) = (a,b)$ and $Q(\beta_1(t)) = 0$.

In other words, $\beta_1$ traces a path on the complex curve $Q(x,y) = 0$ so that the $x$ coordinate of the path following $\beta$.

We would now like to show that for a ball $B_1$ which is the union of $B$, $B_\epsilon$, and a sufficiently small tubular neighbourhood $N(\beta_1)$ of $\beta_1$, the intersection of $Q(x,y)$ with the boundary $S^3$ of this ball will be connected sum of $K$ with $T_{p,q}$.

For any $r$ between $\Re(x_m)$ and $a$, $\Re(x) = r$ and $Q(x,y) = 0$ are transverse at $\beta_1(t)$, where $t  \in [0,1)$ satisfies $\Re(\beta(t)) = r$. This is because $Q(x,y)$ has $\ddx{Q(x,y)(\beta_1)}{y} \neq 0$, so it is transverse to $x = \beta(t)$. This means that the tangent spaces to the surface $Q(x,y) = 0$ and the plane $\{(x,y)|x = \beta(t)\}$ span the 4 dimensional tangent space to $\cc^2$, so the surface $Q(x,y) = 0$ must also be transverse to the $3$-dimensional space $\{(x,y)|\Re(x) = \Re(\beta(t))\}$. 

This implies that for a sufficiently small tubular neighbourhood of the image of $\beta_1$, its intersection with $Q(x,y) = 0$ is a band, and its boundary are two arcs that connect two adjacent points in $L \subset \partial B$ to two adjacent points in $T_{p,q} \subset \partial B_\epsilon$, so adding in the tubular neighbourhood gives us the connected sum of the two knots.

So now we have constructed a polynomial $Q(x,y)$ with a singularity at $(a,b)$, and a ball $B$ such that the boundary of $B \cap \{(x,y)|Q(x,y)  = 0\}$ is the desired knot in $S^3 = \partial B$.
\end{proof}

As a consequence we have the following computation:

\begin{cor} For $K$ given by the connected sum of a quasi-positive knot with a non-trivial right handed torus knot, $s^\sharp(K) = 2g-1$. 
\end{cor}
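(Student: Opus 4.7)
The plan is to follow the template of the algebraic-knot corollary above almost verbatim. First, the preceding lemma produces a polynomial $Q(x,y)$ and a ball $B_1 \subset \cc^2$ such that $\{Q = 0\} \cap \partial B_1$ is isotopic to $K$ (viewing $K$ as the connected sum of a quasi-positive knot with a non-trivial right-handed torus knot), and such that the complex curve $\{Q = 0\}$ has at least one singularity inside $B_1$. Next, apply Lemma \ref{make_transverse} to perturb this curve to a complex curve $\Sigma_1 \subset B_1$ that is embedded away from a single transverse double point and whose boundary in $\partial B_1$ is still isotopic to $K$. Finally, apply Lemma \ref{one_double_point_lemma} to $\Sigma_1$ to conclude that $s^\sharp(K) = 2g - 1$, where $g$ is the genus of a smooth embedded perturbation of $\Sigma_1$.

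The one hypothesis of Lemma \ref{one_double_point_lemma} that requires checking is that $\Sigma_1$ is irreducible in $B_1$ in the sense defined earlier. I would verify this exactly as in the algebraic-knot case: since $K$ is a knot, $\Sigma_1 \cap \partial B_1$ is connected, so at most one component of the intersection $\Sigma_1 \cap B_1$ can meet the boundary sphere. If $\Sigma_1 \cap B_1$ were reducible, there would be another component, which would be a positive-dimensional piece of the affine complex curve $\{Q = 0\} \subset \cc^2$ entirely contained in the bounded set $B_1$. This is impossible, since restricting a coordinate function to such a piece would give a non-constant bounded holomorphic function on a positive-dimensional affine variety, contradicting the maximum modulus principle. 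Hence $\Sigma_1$ is irreducible in $B_1$ and Lemma \ref{one_double_point_lemma} applies, yielding the sharper decomposition $s^\sharp_+(K) = g$ and $s^\sharp_-(K) = g - 1$ as well.

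The main obstacle in this chain is not in the corollary itself but in the preceding lemma, which had to construct the ambient polynomial $Q(x,y)$ together with a ball realizing the correct connected-sum topology on the boundary. Once that construction is in hand, the irreducibility check and the appeal to Lemmas \ref{make_transverse} and \ref{one_double_point_lemma} are routine, and the argument is essentially identical to the one used for algebraic knots and for torus links.
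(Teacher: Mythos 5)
Your proposal is correct and follows the same chain the paper uses (and spells out explicitly for the analogous algebraic-knot corollary): the preceding lemma produces a singular complex curve bounding $K$ in a ball $B_1$, Lemma~\ref{make_transverse} reduces to a single transverse double point, connectedness of $K$ together with the maximum-modulus argument gives irreducibility in $B_1$, and Lemma~\ref{one_double_point_lemma} then yields $s^\sharp(K)=2g-1$. The paper leaves this corollary with no written proof precisely because it is this same routine assembly.
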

\qed

In particular, we see that for the connected sum of two copies of $T_{p,q}$ for a right handed torus knot $T_{p,q}$, 
\[s^\sharp(T_{p,q} \# T_{p,q}) = 2(2g)-1 \neq 2(2g-1)=   s^\sharp(T_{p,q}),\]
so $s^\sharp$ is not additive with respect to connected sum.

However our computations leave the open the following possibilities.

\begin{question} Is $s^\sharp_+$ additive? If not, is $s^\sharp$ almost a homomorphism in the sense that $|s^\sharp(K_1 \# K_2 ) -s^\sharp(K_1) - s^\sharp(K_2)| \leq C$ for some constant $C$? 
\end{question}

\subsection{Computations of $s^\sharp_\pm(K)$ for quasi-positive $K$}

In this subsection, we will show Proposition \ref{propn_quasi-positive_inequality}, which we restate here.

\begin{propn} For a quasi-positive knot $K$ with even genus, $s^\sharp_+(K) = g$ and $g-1 \leq s^\sharp_-(K)  \leq g$. For a quasi-positive knot $K$ with odd genus, $g \leq s^\sharp_+(K) \leq g+1$ and  $s^\sharp_-(K) = g-1$.
\end{propn}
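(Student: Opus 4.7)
The plan is to exploit Rudolph's characterization of quasi-positive knots (such a $K$ bounds an embedded complex curve $\Sigma \subset B^4$ of genus equal to its $4$-ball genus $g$) together with the non-vanishing result from \cite{khgfi} and the corollary already proved for $K \# T_{p,q}$ with $T_{p,q}$ a non-trivial right handed torus knot.

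View $\Sigma$ as an embedded cobordism $\Sigma \colon U \to K$ by puncturing, so $p(\Sigma) = 0$ and $g(\Sigma) = g$. The non-vanishing result of \cite{khgfi} applies because $\Sigma$ is a complex curve, so $\psi^\sharp(\Sigma)$ is not divisible by $\lambda$, i.e. at least one of $m_+(\Sigma),m_-(\Sigma)$ equals $0$. Proposition \ref{propn_s_plus_minus_difference} gives $s^\sharp_+(K) \geq s^\sharp_-(K)$; unpacking the even/odd genus formulas in the definition of $s^\sharp_\pm$ shows in both parities that $m_+(\Sigma) \leq m_-(\Sigma)$, so combined with non-vanishing one concludes $m_+(\Sigma) = 0$. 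This already yields $s^\sharp_+(K) = g$ when $g$ is even, and $s^\sharp_-(K) = g - 1$ when $g$ is odd.

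What remains is to show $m_-(\Sigma) \leq 1$, which delivers both of the remaining inequalities. For this I would compare $K$ with $K' := K \# T_{2,3}$. The corollary already proved for connected sums of quasi-positive knots with non-trivial right handed torus knots gives $s^\sharp(K') = 2g(K') - 1$, and since the $4$-ball genus is additive under connected sum for quasi-positive knots (importable from additivity of $\tau$ together with $\tau = g_4$ for quasi-positive knots, or from the analogous statement for the Rasmussen invariant), one has $g(K') = g + 1$ and hence $s^\sharp(K') = 2g + 1$. Moreover, there is an embedded genus-one cobordism from $K$ to $K'$, obtained by realizing inside the connected-sum region the single crossing change that unknots the $T_{2,3}$ summand. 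Applying Theorem \ref{thm_s_sharp_cobordism_ineq} to this cobordism gives $s^\sharp(K') - s^\sharp(K) \leq 2$, and therefore $s^\sharp(K) \geq 2g - 1$. Using $\Sigma$, one has $s^\sharp(K) = s^\sharp_+(K) + s^\sharp_-(K) = 2g - m_+(\Sigma) - m_-(\Sigma) = 2g - m_-(\Sigma)$ (the sum simplifies to this in both parities), so this forces $m_-(\Sigma) \leq 1$.

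Substituting $m_+(\Sigma) = 0$ and $m_-(\Sigma) \in \{0,1\}$ back into the definitions of $s^\sharp_\pm(K)$, split by the parity of $g$, reproduces the four bounds in the statement. The main obstacle I expect is the lower bound $s^\sharp(K) \geq 2g - 1$: the intrinsic $s^\sharp$ theory comes only with cobordism \emph{upper} bounds, so the route above leans on the connected-sum corollary together with $4$-ball genus additivity for quasi-positive knots, a fact that has to be imported from Heegaard--Floer or Khovanov theory.
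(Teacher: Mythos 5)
Your proof is correct and takes a genuinely different route from the paper's. The paper fixes $T = T_{4,5}$ and builds an explicit immersed cobordism $U \to U \amalg U \to K \amalg T \to K \# T$ that factors the embedded complex curve $\Sigma \colon U \to K$ through the first tensor factor; after observing that this composite has the same genus and the same number of positive double points as the singular complex curve bounding $K \# T$ constructed earlier (hence induces the same map on $I'$), it reads $m_+(\Sigma) = 0$ and $m_-(\Sigma) \leq 1$ directly from the known $\lambda$-divisibilities of that map. You instead split the work into two independent estimates: (a) $m_+(\Sigma) = 0$ from the non-vanishing result applied to $\Sigma$ together with $m_+(\Sigma) \leq m_-(\Sigma)$ (unwound from Proposition \ref{propn_s_plus_minus_difference} in each parity); and (b) $m_-(\Sigma) \leq 1$ from the cobordism inequality of Theorem \ref{thm_s_sharp_cobordism_ineq} applied to a genus-one cobordism $K \to K \# T_{2,3}$ together with the corollary computing $s^\sharp(K \# T_{2,3})$. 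Both arguments ultimately rest on the same $K \# T_{p,q}$ computation, but yours is more modular, replacing the explicit composition-of-maps bookkeeping with the general-purpose cobordism inequality; the paper's version avoids Theorem \ref{thm_s_sharp_cobordism_ineq} and Proposition \ref{propn_s_plus_minus_difference} but is somewhat more hands-on.

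Two small points on your write-up. The detour through $\tau = g_4$ and its additivity is unnecessary: the genus appearing in the corollary is by definition that of the embedded complex curve bounding $K \# T_{2,3}$, and the construction in the preceding lemma exhibits this curve as the curve for $K$, the cone on $T_{2,3}$, and a joining band, whose perturbed smoothing has genus $g + g(T_{2,3}) = g + 1$ directly, so no appeal to Heegaard Floer or Khovanov theory is needed. Also, "realizing the single crossing change" yields an immersed annulus with one positive double point rather than an embedded genus-one cobordism; either input suffices for Theorem \ref{thm_s_sharp_cobordism_ineq} (both contribute $2$ to the right-hand side), but a clean embedded genus-one cobordism from $K$ to $K'$ is the boundary connected sum of the cylinder $K \times I$ with a genus-one Seifert cobordism from $U$ to $T_{2,3}$.
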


The kinds of argument that we will use will not distinguish between the case when $K$ is the unknot and the case when $K$ is a non-trivial right-handed torus knot, so in the case of even genus, it will not be possible for us to narrow down the possibilities for $s^\sharp_-(K)$ any further than this.

To do this, we will relate $s^\sharp_+$ to $s^\sharp_-$. 

We proceed with the proof of the proposition.

\begin{proof}[Proof of Proposition] Let $\Sigma$ be a punctured embedded complex curve with boundary $K$, which exists because $K$ is quasi-positive, as in \cite{rudolph}. 

Let $T = T_{4,5}$, which is an algebraic knot of even genus. Consider the cobordism $\Sigma_1$
\[U \to U \coprod U \to K \coprod T \to K \# T.\]
where the first map comes from adding a standard disk from the empty link to the unknot, the second map is $\Sigma \coprod \Sigma_T$, where $\Sigma_T$ is the complex curve with a single transverse double point, which we showed existed in a previous section, and the last piece is the usual one handle. Let $\psi:I'(K \coprod T) \to I'(K \#T)$ be the map on $I'$ induced by the last piece of $\Sigma_1$. 

The composite cobordism $\Sigma_1$ has genus $g(\Sigma) + g(\Sigma_T)$ and one positive double point, the map it induces on instantons is
\[u_+ \mapsto u_+ \otimes u_+ \mapsto \lambda^{m_+}v^K_{\pm} \otimes u_+ \mapsto \psi(\lambda^{m_+}v^K_{\pm} \otimes u_+)\]
and
\[u_- \mapsto u_- \otimes u_+ \mapsto \lambda^{m_-}v^K_{\mp} \otimes u_+ \mapsto \psi(\lambda^{m_-}v^K_{\mp} \otimes u_+).\]
However, the composite cobordism is a cobordism from the unknot to $K \# T$, which is a connected sum of a quasi-positive knot with a torus knot, and it has the same genus as the complex curve with one positive double point for $K \# T$, so it induces the same map. We have computed this map previously, and showed that $\psi^\sharp(\Sigma_1)(u^+)$ has no factors of $\lambda$, and $\psi^\sharp(\Sigma_1)(u^-)$ has exactly one factor of $\lambda$. Thus, we see that $m_+ = 0$ and $m_- \leq 1$. 

\end{proof}

\subsection{Knots that can be obtained from the unknot by switching crossings}

Finally, we compute $s^\sharp$ for knots that can be obtained from the unknot by switching only positive crossings and can also be obtained from the unknot by switching only negative crossings. 

\begin{propn} Let $D_K$ be a knot diagram for the knot $K$. Suppose that there is a subset $S_+$ of the positive crossings of $D_K$, such that switching which strand is on top for those crossings results in an unknot. Suppose further that there is a subset $S_-$ of the negative crossings such that switching which strand is on top for those crossings also yields the unknot. Then $s^\sharp_\pm(K) = 0$.
\end{propn}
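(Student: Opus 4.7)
The plan is to realize the two hypotheses of the theorem as genus-zero immersed cylinder cobordisms with no positive double points, and then play them off each other through composition. The starting local observation is that a single crossing change is realized by an immersed cylinder in $S^3 \times I$ with exactly one transverse self-intersection; a short orientation computation at the singular moment (or, as a cross-check, an application of Theorem~\ref{thm_s_sharp_cobordism_ineq} to $U \to T_{2,3}$, where the jump $s^\sharp(T_{2,3}) - s^\sharp(U) = 1$ forces at least one positive double point) shows that switching a positive crossing to a negative one contributes a negative double point, while switching a negative crossing to a positive one contributes a positive double point.

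Using the subset $S_+$, this gives a genus-zero cobordism $\Sigma_+ : K \to U$ with $|S_+|$ double points, all negative, so $p(\Sigma_+) = 0$. For the other direction I start from the unknot obtained from $K$ by switching the crossings in $S_-$; in this unknot diagram the crossings corresponding to $S_-$ are now positive, and a crossing change at each of them takes us back to $K$. This yields a genus-zero cobordism $\Sigma_- : U \to K$ whose $|S_-|$ double points are all negative, so again $p(\Sigma_-) = 0$. Plugging $\Sigma_-$ into the definition of $s^\sharp_\pm$ immediately gives
\[s^\sharp_\pm(K) = g(\Sigma_-) + p(\Sigma_-) - m_\pm(\Sigma_-) = -m_\pm(\Sigma_-) \leq 0.\]

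For the matching lower bound I consider the composition $\Sigma_+ \circ \Sigma_- : U \to U$, which still has $g = 0$ and $p = 0$. By the invariance of $s^\sharp_\pm$ and the value $s^\sharp_\pm(U) = 0$,
\[0 = s^\sharp_\pm(U) = -m_\pm(\Sigma_+ \circ \Sigma_-),\]
so $m_\pm(\Sigma_+ \circ \Sigma_-) = 0$. The functoriality $\psi^\sharp(\Sigma_+ \circ \Sigma_-) = \psi^\sharp(\Sigma_+) \circ \psi^\sharp(\Sigma_-)$ guarantees that every factor of $\lambda$ in $\psi^\sharp(\Sigma_-)(u_\pm)$ persists in the composition, so $m_\pm(\Sigma_-) \leq m_\pm(\Sigma_+ \circ \Sigma_-) = 0$, forcing $m_\pm(\Sigma_-) = 0$ and hence $s^\sharp_\pm(K) = 0$. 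The main obstacle is the sign bookkeeping in the first paragraph: the whole argument rests on the dichotomy that positive-to-negative crossing changes produce only negative double points, which is what lets both $\Sigma_+$ and $\Sigma_-$ be arranged with $p = 0$ simultaneously. Once this is established, the remainder is a clean application of the definition of $s^\sharp_\pm$, its vanishing on the unknot, and the composition property of $\psi^\sharp$.
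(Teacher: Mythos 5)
Your argument is correct and follows essentially the same route as the paper: both build $\Sigma_- : U \to K$ and $\Sigma_+ : K \to U$ out of the given crossing changes, note that the composite $\Sigma_+ \circ \Sigma_- : U \to U$ has genus zero and no positive double points, use $s^\sharp_\pm(U)=0$ to force $m_\pm(\Sigma_+ \circ \Sigma_-)=0$, and then pull that back through functoriality to get $m_\pm(\Sigma_-)=0$. Your explicit sign-convention check via Theorem~\ref{thm_s_sharp_cobordism_ineq}, and the separate presentation of the $\leq 0$ bound, are added bookkeeping that the paper leaves implicit, but the substance of the argument is the same.
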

\begin{proof} Let $\Sigma_-$ be the immersed cobordism from the unknot to $K$ obtained by passing the knot through itself at the negative crossings, and $\Sigma_+$ be the cobordism from $K$ to the unknot obtained by passing strands through at the positive crossings. Then the composite cobordism $\Sigma_+ \circ \Sigma_-$ from the unknot to itself has only negative double points, so $\psi^\sharp(\Sigma_+ \circ \Sigma_-)(u_+)$ and $\psi^\sharp{\Sigma_+ \circ \Sigma_-}(u_-)$ have no factors of $\lambda$, because the cobordism has no positive double points and no genus, and $s^\sharp_\pm(U) = 0$.

Thus, $\psi^\sharp(\Sigma_-)(u_+)$ and $\psi^\sharp(\Sigma_-)(u_-)$ also have no factors of $\lambda$. Because $\Sigma_-$ also has no genus and no positive double points, we may conclude that $s^\sharp_\pm(K) = 0$. 
\end{proof}

\section{Computations of $s^\sharp_I$ for some torus links}

We now show Theorem \ref{thm_s_sharp_I_links}, which we restate here.

\begin{thm} Let $m,n>1$ be relatively prime positive integers. Let $T_{md,nd}$ be the $(md,nd)$ torus link, which has $d$ components. Then 

\[s^\sharp_I(T_{md,nd}) =  \begin{cases} d\frac{(m-1)(n-1)}{2}+mn\frac{d(d-1)}{2} - n(I) & \text{if $\frac{(m-1)(n-1)}{2}$ is even}\\
d\frac{(m-1)(n-1)}{2}+d+mn\frac{d(d-1)}{2} - 3n(I) & \text{else.}\end{cases}\]
where $n(I)$ is the number of $(-1)$s in $I$.
\end{thm}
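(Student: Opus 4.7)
The plan is to compute $s^\sharp_I(T_{md,nd})$ by exhibiting the natural immersed cobordism from the perturbed complex curve and evaluating the defining formula. Let $\Sigma: U_d \to T_{md,nd}$ be the component-preserving immersed cobordism obtained by intersecting a $4$-ball $B\subset \cc^2$ with the reducible perturbation $\prod_{i=0}^{d-1}(x^m - \omega^i y^n + \epsilon_i) = 0$ of the singular curve $x^{md} - y^{nd} = 0$, where $\omega = e^{2\pi i / d}$ and the $\epsilon_i$ are small generic complex numbers. Its $d$ components are punctured Milnor fibers of $T_{m,n}$, each of genus $g_0 = (m-1)(n-1)/2$; and by Bezout any two distinct components intersect in exactly $mn$ transverse positive double points, so $p(\Sigma) = mn\binom{d}{2}$. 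Substituting these topological data into Definition \ref{s_sharp_I_defn} reduces the theorem to the single calculation of $m_{(-1)^{g(\Sigma)}I}(\Sigma)$ for each $I$; a short bookkeeping check shows that this must equal $n(I)$ in the $g_0$ even case and $n(-I) = d - n(I)$ in the $g_0$ odd case in order to yield the two parity branches of the claimed formula.

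To carry out this calculation, I would factor $\Sigma$ through an embedded disjoint-union cobordism and a linking cobordism. Let $\Sigma_0 = \bigsqcup_k F_k: U_d \to d\cdot T_{m,n}$ be the embedded disjoint union of the $d$ punctured Milnor fibers of $T_{m,n}$, and let $\Xi: d\cdot T_{m,n} \to T_{md,nd}$ be any component-preserving cobordism with annular components that carries all $mn\binom{d}{2}$ positive double points (obtained by ``plaiting'' $d$ unknotted parallel copies of $T_{m,n}$ into their linked torus-link configuration). Both $\Sigma$ and $\Xi\circ\Sigma_0$ are component-preserving cobordisms $U_d \to T_{md,nd}$ with matching per-component genera $g_0$ and the same $p = mn\binom{d}{2}$, so the homotopy-through-local-moves argument in the well-definedness proof of $s^\sharp_I$ identifies their induced maps on $I'$. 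By excision, $I'(d\cdot T_{m,n}) \simeq I'(T_{m,n})^{\otimes d}$ and $\psi^\sharp(\Sigma_0)$ equals the tensor product of the single-knot Milnor fiber maps $\psi^\sharp(F_k)$; the already-established algebraic-knot computation $s^\sharp_\pm(T_{m,n}) = g_0, g_0-1$ determines each factor up to units, giving $\psi^\sharp(\Sigma_0)(u_I) \sim \lambda^{n(I)} w_I$ for a distinguished basis element $w_I$ of $I'(d\cdot T_{m,n})$.

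The main obstacle is controlling $\psi^\sharp(\Xi)$, for which there is no embedded model to compare against because $d \geq 2$ parallel copies of $T_{m,n}$ with pairwise linking number $mn$ cannot bound any embedded component-preserving cobordism to $T_{md,nd}$. I would analyze $\psi^\sharp(\Xi)$ indirectly, following the template of the proof of Proposition \ref{propn_s_plus_minus_difference}: compose $\Sigma$ on both sides with the merge cobordisms $\mu_{U_d}: U \sqcup U_d \to U_d$ and $\mu_{T_{md,nd}}: U \sqcup T_{md,nd} \to T_{md,nd}$ and use the functoriality identity $\mu_{T_{md,nd}} \circ (\Id \sqcup \Sigma) = \Sigma \circ \mu_{U_d}$ together with the algebra structure $u_- \cdot u_+ = u_-$ and $u_- \cdot u_- = \lambda^2 u_+$ on $I'(U)$. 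Expanding both sides in the standard basis produces a recursion relating the $m_I(\Sigma)$ values across indices $I$ differing in a single coordinate, and the recursion's base case is pinned down by the non-vanishing result of Section 5.4 of \cite{khgfi} applied to the smoothed irreducible complex curve cobordism from $U$ to $T_{md,nd}$, whose $s^\sharp_\pm$ values are $g_M$ and $g_M - 1$ by Lemma \ref{lemma_curve_with_double_pt}. Unfolding this recursion across all $d$ components should reproduce the required values of $m_I(\Sigma)$ in each parity case; the parity dichotomy in the statement ultimately tracks the $v_+ \leftrightarrow v_-$ grading swap of the odd-genus Milnor fiber map through the $(-1)^{g(\Sigma)}$ in Definition \ref{s_sharp_I_defn}.
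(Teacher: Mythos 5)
Your reducible perturbation $\prod_i (x^m-\omega^i y^n+\epsilon_i)=0$ is a legitimate choice of cobordism and its topological data are correctly identified, but two things go wrong.

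First, your ``bookkeeping check'' does not actually produce the odd-parity branch of the stated formula. With your $\Sigma$ (each component embedded of genus $g_0=(m-1)(n-1)/2$, $p=mn\binom{d}{2}$ positive double points), Definition \ref{s_sharp_I_defn} in the $g_0$-odd case gives
\[
s^\sharp_I = d(g_0-1) + 2\bigl(d-n(I)\bigr) + mn\tfrac{d(d-1)}{2} - m_{-I}(\Sigma),
\]
so your claimed value $m_{(-1)^{g(\Sigma)}(I)}(\Sigma)=n(-I)=d-n(I)$ yields $s^\sharp_I = dg_0 + mn\tfrac{d(d-1)}{2} - n(I)$, identical to the even case, rather than the stated $dg_0 + d + mn\tfrac{d(d-1)}{2} - 3n(I)$. (In fact $m_J(\Sigma)=n(J)$ is the value the paper's own argument produces, and plugging it through Definition \ref{s_sharp_I_defn} with either the paper's singular curve or your smoothed one gives a parity-independent formula; you should verify this carefully rather than asserting the two branches fall out.) Whatever the resolution, your arithmetic as written is inconsistent with the statement you are trying to prove, and you should have caught this.

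Second, and more importantly, your method for pinning down the values $m_I(\Sigma)$ has a genuine gap. The merge-cobordism argument you invoke — composing with $\mu_L:U\sqcup L\to L$ and using $u_-\cdot u_+=u_-$, $u_-\cdot u_-=\lambda^2u_+$ — is exactly the argument the paper uses for Proposition \ref{propn_s_plus_minus_difference}, and by design it only produces inequalities: one does not know $\psi^\sharp(\mu_L)(u_-\otimes v)$ explicitly, only that it is some element of $I'(L)$, which is why the paper obtains $m_-\geq m_+$ and $m_-\leq m_++2$ rather than equalities. You need exact recursions to compute $m_I(\Sigma)$, and for those the paper uses a different device, which your proposal does not contain: it compares $\Sigma_1\circ D_k$ against $\Sigma\circ T_k$, where $T_k$ adds a handle to component $k$, $D_k$ adds a positive double point on component $k$, and $\Sigma$, $\Sigma_1$ are two complex curves differing by smoothing the one self-double-point on component $k$. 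Because these two composites have \emph{identical} per-component genera and \emph{identical} positive-double-point counts, they induce the same map on $I'$, and expanding $\psi^\sharp(T_k)(u_\pm)=2u_\mp, 2\lambda^2u_\pm$ against $\psi^\sharp(D_k)\sim\lambda\cdot\mathrm{Id}$ yields the exact relation $m_{J^-}(\Sigma)=1+m_{J^+}(\Sigma_1)$ across indices differing in slot $k$. Iterating over $k$ and anchoring with the non-vanishing result of \cite{khgfi} (which forces $m_{(1,\ldots,1)}=0$) determines all the $m_I$ precisely. Your proposed factorization $\Sigma=\Xi\circ\Sigma_0$ through an annular ``linking cobordism'' $\Xi$ does not help, since you have no handle on $\psi^\sharp(\Xi)$, and the merge-cobordism recursion you suggest as a substitute cannot give the equalities the computation requires.
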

\begin{proof} The idea of the proof is to start by constructing a $d$ component immersed complex curve whose boundary is $T_{md,nd}$ such that the curve only has transverse double points and has exactly one double point on each component. Then, we will compare the maps induced by this cobordism with ones induced by cobordisms that come from perturbing away some of the double points.

Let us consider the $T_{md,nd}$ as the link of the curve cut out by
\[x^{md}-y^{nd}\]
at the point $(0,0)$. We can rewrite this as
\[ \prod_{i=0}^{d-1} (x^m-\omega^i y^n)\]
where $\omega$ is a $d$th root of unity. For a small ball $B$ around $(0,0)$, the intersection of the curve cut out by the polynomial and boundary of the ball is the torus link. Observe that each $(x^m-\omega^i y^n)$ is irreducible, and the intersection of each pair of these irreducible components is at the origin, where the intersection is not transverse. Let us start by fixing this.

For sufficiently small $a_i$, the polynomial 
\[\prod_{i=0}^{d-1} ((x-a_i)^m-\omega^i y^n )\]
also cuts out the link, $T_{md,nd}$. The irreducible components are cut out by $((x-a_i)^m-\omega^i (y)^n )$. Let us look at the intersections of these. This happens when
\begin{align*}
(x-a_i)^m &= \omega^i y^n \\
(x-a_j)^m &= \omega^j y^n \\
\end{align*}
As long as we choose the $a_i$ to be pairwise distinct, we know that at the intersection of these curves, $y \neq 0$. Then, we have 
\[(x-a_i)^m = \omega^{i-j}(x-a_j)^m,\]
so $(x-a_i) = \eta_{i-j,t} (x-a_j)$, for some $\eta_{i-j,1},\eta_{i-j,2},\ldots, \eta_{i-j,m}$ the $m$th roots of $\omega^{i-j}$. It is easy to see that for all $t$ in $\eta_t \neq 1$, so for each $t \in \{1,2\ldots, m\}$, 
\[(x-a_i) = \eta_{i-j,t} (x-a_j)\]
is a linear equation in $x$ and has exactly one solution.
\[x =\frac{ a_i -\eta_{i-j,t} a_j}{1-\eta_{i-j,t}}.\]
For each of these $m$ values of $x$, where are $n$ values of $y$ satisfying the desired equations, so there are $mn$ solutions. 
We can arrange the $a_i$ so that for different pairs $\{i,j\} \neq \{i',j'\}$, we avoid
\[\frac{ a_i - \eta_{i-j,t} a_j}{1-\eta_t} = \frac{ a_{i'} - \eta_{i'-j',s} a_{j'}}{1-\eta_s}\]
for any $s$ and $t$, because for every combination $s,t,i,j,i',j'$, the above constraint is a linear constraint on $a_i,a_j,a_i',a_j'$, and therefore are complex codimension 1, and we can easily choose the $a_i$ to avoid them.

Now, we wish to show that at these intersection points, the curves $(x-a_i)^m = \omega^i y^n$ and $(x-a_j)^m = \omega^j y^n$ intersect transversely. For this, it suffices that the gradients are linearly independent. These gradients are
\[(m(x-a_i)^{m-1}, \omega^i ny^{n-1})\]
and
\[(m(x-a_j)^{m-1}, \omega^j ny^{n-1}).\]
At the intersection points $y \neq 0$, so if these two were linearly dependent, we would have
\[(x-a_i)^{m-1} = \omega^{i-j} (x-a_j)^{m-1},\]
But this is not possible, because at the intersection points, we know that $\frac{x-a_i}{x-a_j}$ is an $m$th root of $\omega^{i-j}$, so it cannot also be an $(m-1)$th root of $\omega^{i-j}$, since $\omega^{i-j} \neq 1$, so we cannot have $\eta^m = \omega^{i-j}$ and $\eta^{m-1} = \omega^{i-j}$. Thus we have that the intersection points of the different components are transverse.

To summarise, we have shown that we can choose arbitrarily small $a_i$ so that the polynomial 
\[\prod_{i=0}^{d-1} ((x-a_i)^m-\omega^i y^n )\]
cuts out that $T_{md,nd}$ torus link, and the irreducible components of if, which are cut out by $((x-a_i)^m-\omega^i y^n )$, intersect transversely, at $mn$ points for each pair $(i,j)$, and such that in total there are $mn\frac{d(d-1)}{2}$ of these intersection points, and all of them are transverse singularities of the polynomial.

Each component, however, still has a singularity which is not a transverse double point. Let us fix this by choosing small $\epsilon_i$ and replacing our polynomial with
\[\prod_{i=0}^{d-1} ((x-a_i)^m-\omega^i y^n-\epsilon_i(x-a_i)y ).\]
For $\epsilon_i$ small enough, it is still the case that the $((x-a_i)^m-\omega^i y^n-\epsilon_i(x-a_i)y )$ are irreducible and the corresponding curves intersect each other transversely at distinct points, so that there are a total of $mn\frac{d(d-1)}{2}$ intersections. However, for sufficiently small $\epsilon_i>0$, we can arrange that $((x-a_i)^m-\omega^i y^n-\epsilon_i(x-a_i)y )$ has only transverse double points as singularities. To see this, it suffices to show that for sufficiently small $\epsilon>0$, all the singularities of
\[x^m-y^n-\epsilon xy=0\]
are transverse double points.
A singularity is a point that also satisfies $\ddx{(x^m-y^n-\epsilon xy)}{x}=0$ and $\ddx{(x^m-y^n-\epsilon xy)}{y}=0$, so
\begin{align*}
x^m-y^n-\epsilon xy &= 0\\
mx^{m-1}-\epsilon y & = 0\\
-ny^{n-1}-\epsilon x & = 0\\
\end{align*}
We can write the latter two equations as 
\begin{align*}
x^{m} & = \frac{\epsilon x y}{m}\\
y^{n}& = \frac{-\epsilon x y }{n}\\
\end{align*}
Plugging these into the original equation, we get
\[\epsilon x y \(\frac{1}{m}+\frac{1}{n} - 1\) = 0\]
and since $m$ and $n$ are relatively prime, we get that this means $ x= 0$ or $y = 0$, and to satisfy our equations, we would have to have $x=y=0$. Thus, the only singularity is at $x=y=0$, and this is a transverse double point.

We have now produced the polynomial,
\[P(x,y)  = \prod_{i=0}^{d-1} ((x-a_i)^m-\omega^i y^n-\epsilon_i(x-a_i)y )\]
cutting out the torus link $T_{md,nd}$ such that the $((x-a_i)^m-\omega^i y^n-\epsilon_i(x-a_i)y )$ are irreducible. Overall, the surface cut out is immersed. There are $mn\frac{d(d-1)}{2}+d$ double points, where the $d$ extra double points are $(a_i,0)$. Additionally, notice that each $((x-a_i)^m-\omega^i y^n-\epsilon_i(x-a_i)y )$ cuts out the $m,n$ torus knot, and it has one double point, and as we have seen above in our examination of immersed complex surfaces that cut out the torus knot in the sphere, it has genus $\frac{(m-1)(n-1)}{2}-1$. Thus, overall we have an immersed complex curve with $d$ components, $mn \frac{d(d-1)}{2}+d$ double points, one on each individual component, and $mn \frac{d(d-1)}{2}$ between components, and genus $\frac{(m-1)(n-1)}{2}-1$ on each component.

We will now compute the number of factors of $\lambda$ the generator $u_I$ picks up under the map induced by this curve, with a ball removed from each component, as a cobordism between the $d$ component unlink to $T_{md,nd}$. 

We now proceed as we did in the previous section: In addition to the curve cut out by $P(x,y)$ consider the curve cut out by 
\[P_1(x,y) = ((x-a_i)^m-y^n-\epsilon) \prod_{i=1}^{d-1} ((x-a_i)^m-\omega^i y^n-\epsilon_i(x-a_i)y ).\]
This is also a complex curve, and we can still choose it so that the components intersect transversely at the same number of points as before, but now $((x-a_i)^m-y^n-\epsilon)$ has no double points and has genus $\frac{(m-1)(n-1)}{2}$. 

Let $\Sigma$ and $\Sigma_1$ denote the complex curves cut out by $P$ and $P_1$ respectively, with a disk removed from each component.

Now, consider the cobordism $T_1:U_d \to U_d$ given by a twice punctured $\tt^2$ on the first component, and a cylinder for each other component, and $D_1: U_d \to U_d$, given by a cylinder with a single positive double point on the first component, and a cylinder for each other component.

Then, by the same argument as in Lemma \ref{one_double_point_lemma}, using the maps induced by $\Sigma_1 \circ D_1$ and $\Sigma \circ T_1$ are the same, and from this we can see that for a generator $u_I = u_{i_1} \otimes \cdots \otimes u_{i_d}$, if $u_{i_1} = u_-$, then $\psi^\sharp(\Sigma_1)(u_I)$ and $\psi^\sharp(\Sigma)(u_I)$ are both multiples of $\lambda$. We can apply the same argument for each of the components, so we get that $\psi^\sharp(\Sigma)u_I$ is a multiple of $\lambda$ if any of the $u_{i_d}$ are $u_-$. However, by the same argument as in Lemma \ref{one_double_point_lemma}, we also have that there is some $u_I$ which maps to a non-multiple of $\lambda$, since the $u_I$ generate $I'(U_d)$. Thus, $m_I(\Sigma) = 0$ for $I$ corresponding to $u_+ \otimes \cdots \otimes u_+$. 

From this, and by comparing $\Sigma_1 \circ D_1$ and $\Sigma \circ T_1$, as in Lemma \ref{one_double_point_lemma}, we can deduce that $m_I(\Sigma)$ is the number of instances of $u_-$ that appears in $u_I$. The value of $s^\sharp_I(T_{md,nd})$ follows.

\end{proof}

\bibliographystyle{abbrv}

\bibliography{s_sharp_links}

\begin{thebibliography}{1}

\bibitem{beliakova_wehrli}
A.~Beliakova and S.~Wehrli.
\newblock Categorification of the colored {J}ones polynomial and {R}asmussen
  invariant of links.
\newblock {\em Canad. J. Math.}, 60(6):1240--1266, 2008.

\bibitem{km_unknot}
P.~B. Kronheimer and T.~S. Mrowka.
\newblock Khovanov homology is an unknot-detector.
\newblock {\em Publ. Math. Inst. Hautes \'{E}tudes Sci.}, (113):97--208, 2011.

\bibitem{khgfi}
P.~B. Kronheimer and T.~S. Mrowka.
\newblock Knot homology groups from instantons.
\newblock {\em J. Topol.}, 4(4):835--918, 2011.

\bibitem{gtri}
P.~B. Kronheimer and T.~S. Mrowka.
\newblock Gauge theory and {R}asmussen's invariant.
\newblock {\em J. Topol.}, 6(3):659--674, 2013.
\newblock Corrigendum at \url{arXiv:1110.1297v3}.

\bibitem{pardon}
J.~Pardon.
\newblock The link concordance invariant from {L}ee homology.
\newblock {\em Algebr. Geom. Topol.}, 12(2):1081--1098, 2012.

\bibitem{rasmussen}
J.~Rasmussen.
\newblock Khovanov homology and the slice genus.
\newblock {\em Invent. Math.}, 182(2):419--447, 2010.

\bibitem{rudolph}
L.~Rudolph.
\newblock Algebraic functions and closed braids.
\newblock {\em Topology}, 22(2):191--202, 1983.

\end{thebibliography}

\end{document}